\documentclass[a4paper,12pt]{amsart}

\usepackage{amssymb,mathrsfs,bm}
\usepackage{mathtools,galois,braket,esint}
\usepackage[utf8]{inputenc}
\usepackage[margin=0.5in]{geometry}
\usepackage{hyperref,hypcap}
\hypersetup{colorlinks=true,allcolors=black,bookmarksdepth=3}
\usepackage[inline]{enumitem}
\usepackage{array}
\usepackage[utf8]{inputenc}
\usepackage{tikz, tikz-3dplot}
\usepackage{amsmath, amsthm, amssymb,graphics }
\usetikzlibrary{arrows,shapes,positioning}
\usetikzlibrary{intersections}
\usetikzlibrary{decorations.pathreplacing,calligraphy}

\usepackage{amsmath,amsthm,amssymb,amscd,color, xcolor,mathtools,url,tikz}
\usepackage{bbm}
\usepackage{pgfplots}


\theoremstyle{plain}
\newtheorem{theorem}{Theorem}[section]
\newtheorem{lemma}[theorem]{Lemma}
\newtheorem{proposition}[theorem]{Proposition}

\theoremstyle{definition}
\newtheorem{definition}[theorem]{Definition}

\theoremstyle{remark}

\numberwithin{equation}{section}
\numberwithin{figure}{section}
\numberwithin{table}{section}
\setcounter{tocdepth}{1}


\newcommand{\dd}{\mathop{}\!\mathrm{d}}


\allowdisplaybreaks

\newcommand{\comment}[1]{\vskip.1cm
\fbox{%
\parbox{0.93\linewidth}{\footnotesize #1}}
\vskip.1cm
}

\begin{document}

\author[A. Constantin]{Adrian Constantin}
\address{Adrian Constantin, Faculty of Mathematics, University of Vienna, Oskar-Morgenstern-Platz 1, 1090 Vienna, Austria}
\email{adrian.constantin@univie.ac.at}

\author[D. Dritschel]{David Dritschel}
\address{David Dritschel, Mathematical Institute, University of St Andrews, St Andrews KY16 9SS, UK}
\email{david.dritschel@st-andrews.ac.uk}

\author[P. Germain]{Pierre Germain}
\address{Pierre Germain, Department of Mathematics, Huxley Building, South Kensington Campus,
Imperial College London, London SW7 2AZ, United Kingdom}
\email{pgermain@ic.ac.uk}

\title{On the onset of filamentation on two-dimensional vorticity
interfaces}

\begin{abstract} We study an asymptotic nonlinear model for filamention on two-dimensional vorticity interfaces. Different re-formulations of the model equation reveal its underlying structural properties. They enable us to construct global weak solutions and to prove the existence of traveling waves.
\end{abstract}

\maketitle

\setcounter{tocdepth}{1}

\tableofcontents

\section{Introduction}

\subsection{The equation}
We consider complex functions $u$ on the torus $\mathbb{T} = \mathbb{R} / (2\pi \mathbb{Z})$ with positive spectrum $\mathbb{P}_+ u = u$, where $\mathbb{P}_+$ is the projector defined by $\mathbb{P}_+ e^{ikx} = \mathbf{1}_{[1,\infty)} (k) e^{ikx}$. We add a time dependence $u=u(t,x)$ and evolve such functions by the following equation
\begin{equation}
\label{eqC} \tag{$\star$}
\partial_t u = \partial_x \mathcal{C}_\sigma[u]
\end{equation}
where the cubic term is given by
$$
\mathcal{C}_\sigma [u](x) = \mathbb{P}_+ \left[ \frac{1}{4\pi} \int_0^{2\pi} \frac{|u(x) - u(y)|^2 (u(x) - u(y))}{1-\cos (x-y)} \dd y - \sigma |u(x)|^2 u(x) \right],
$$
for a coefficient $\sigma$ equal to $0$ or $1$. Equation \eqref{eqC} is referred to as the `filamentation equation'.
\begin{itemize}
\item $\sigma=0$ corresponds to the periodic planar case (a linear vorticity interface in the plane)
\item $\sigma=1$ corresponds to the spherical case (a zonal vorticity interface on the sphere)
\end{itemize}

\subsection{Known results} The filamentation equation \eqref{eqC} was first derived by Dritschel \cite{Dritschel} in a different form, as an asymptotic model describing the onset of filamentation on two-dimensional vorticity interfaces. To be more specific, this equation arose in \cite{Dritschel} as the normal form (valid for small wave-slope solutions) of a weakly nonlinear expansion of the two-dimensional Euler equation, specifically the \textit{contour dynamics} equation applicable to piecewise-uniform vorticity\cite{Zabusky1979,Dritschel1988o}.

The filamentation equation for $\sigma=0$ can also be seen as the normal form to cubic order of the Burgers-Hilbert equation, see Biello and Hunter \cite{BH}. These authors introduced the Burgers-Hilbert equation as a universal model for constant-dispersion Hamiltonian systems, including in particular two-dimensional vorticity interfaces\footnote{Though it appears that, in the regime of small solutions, the correct equation is the one derived in \cite{Dritschel}, since cubic terms cannot be neglected.}. The Burgers-Hilbert equation was first written down by Marsden and Weinstein \cite{MW}; it has stimulated active research over the last decade, see \cite{BN,CCZ,DGS,HITW,Yang} and the review \cite{Hunter}.

In the present article, we revisit the properties and analysis of the filamentation equation. The key observation is that the nonlinearity can be written in the simple form appearing in \eqref{eqC}. Combining this observation with its spectral formulation, we are able to uncover some interesting properties. In the companion paper \cite{DCG}, the derivation of \eqref{eqC} is revisited and its finite-time blow up dynamics are studied.

\subsection{Plan of the article and results obtained} In Section \ref{sectionstructure}, we review the algebraic structure of the equation (conservation laws, symmetries) and give its spectral (Fourier space) expression, relying on computations appearing in appendices \ref{appendixA} and \ref{appendixB}.
In Section \ref{sectionstrong}, we sketch the construction of strong solutions, slightly improving over the result mentioned in \cite{Hunter}.
In Section \ref{sectionweak}, weak solutions, which were conjectured to exist in \cite{Hunter}, are constructed. The proof exploits the coercivity of the Hamiltonian and an appropriate weak formulation.
Finally, Section \ref{sectionvariational} examines variational problems involving the conserved quantities of the equation, resulting in the construction of new traveling wave solutions in Section \ref{sectiontraveling} --- new explicit formulas for traveling waves are also exhibited there for the case $\sigma=1$. Once again, the coercivity of the Hamiltonian is key in the proof.

\subsection{Notations}
A general function $u$ can be expanded in a Fourier series as
$$
u(x) = \sum_{k=-\infty}^\infty a_k e^{ikx}, \qquad a_k = \frac{1}{2\pi} \int_0^{2\pi} u(x) e^{-ikx} \dd x.
$$

Throughout this article, solutions of the equation \eqref{eqC} have positive frequencies only, hence their Fourier expansion reduces to 
$$
u(x) = \sum_{k=1}^\infty a_k e^{ikx}, \qquad a_k = \frac{1}{2\pi} \int_0^{2\pi} u(x) e^{-ikx} \dd x.
$$

Different Fourier multipliers appear in this article; recall that a Fourier multiplier $M$ is defined through its symbol $\mu(k)$ by the relation
$$
M \left[\sum_{k=-\infty}^\infty a_k e^{ikx} \right] = \sum_{k=-\infty}^\infty \mu(k) a_k e^{ikx}.
$$
The table below recapitulates different Fourier multipliers which will be used.

\medskip

\begin{center}
\begin{tabular}{ |c|c|c| } 
 \hline
Operator & Notation  & Symbol \\ 
  \hline
Projector on non-negative frequencies (Szeg\"o) & $\mathbb{P}$ & $\mathbf{1}_{[0,\infty)(k)}$ \\ 
 \hline
Projector on positive frequencies & $\mathbb{P}_+$ & $\mathbf{1}_{[1,\infty)(k)}$ \\ 
 \hline
Derivative & $\partial_x$ & $2\pi ik$ \\
\hline
Absolute derivative &  $\Lambda$ & $2\pi |k|$\\
\hline
Mollifier & $Q^N$ & $\mathbf{1}_{[0,N]}(k)$ \\
\hline
\end{tabular}
\end{center}

\subsection*{Acknowledgements.} Adrian Constantin was supported by the Austrian Science Fund (FWF), grant number Z 387-N. Pierre Germain was supported by a Wolfson fellowship from the Royal Society and the Simons collaboration on Wave Turbulence.

\section{Structure of the equation}

\label{sectionstructure}

\subsection{The energy}
The energy associated to \eqref{eqC} is
$$
\mathcal{E}_\sigma(u) = \frac{1}{4\pi^2} \iint_0^{2\pi} \frac{|u(x) - u(y)|^4}{1-\cos (x-y)} \dd x \dd y - \frac{2 \sigma}{\pi} \int_0^{2\pi} |u(x)|^4 \dd x.
$$

The second summand is obviously the $L^4$ norm raised to the power $4$, while the first summand is equivalent to the norm of the fractional Sobolev space $W^{\frac 14,4} = B^{\frac 14}_{4,4}$ raised to the power 4 (it is the so-called Gagliardo norm). Namely, if $u$ has mean zero, denoting $P_j$ for the usual Littlewood-Paley projection,
$$
\iint_0^{2\pi} \frac{|u(x) - u(y)|^4}{1-\cos (x-y)} \dd x \dd y \sim \| u \|_{B^{\frac 14}_{4,4}}^4 = \sum_{j \geq 1} 2^j \| P_j f \|_{L^4}^4 ,
$$
see Triebel \cite{Triebel}, Section 2.6.1.

The equation \eqref{eqC} can then be interpreted as the symplectic flow associated with this energy through the symplectic form $\omega(u,v) = \operatorname{Re} \int_0^{2\pi}\partial_x^{-1} \overline{u} \,  v \dd x$, see Appendix \ref{symplecticappendix}.

\subsection{Form of the filamentation equation in Fourier space} 
By Lemma \ref{computation2}, the cubic term can be written
\begin{equation}
\label{newformulaC}
\mathcal{C}_\sigma[u] =  \mathbb{P} \left[ \frac{1}{2\pi} |u|^2 \Lambda u - \frac{1}{2\pi} u \Lambda |u|^2 - \sigma |u|^2 u \right] = \sum_{\substack{p = k+\ell - m \\ k,\ell,m,p \geq 1}} e^{ipx} \left( k - |k-p| - \sigma \right) a_k a_\ell \overline{a_m} ,
\end{equation}
where $\Lambda = |\partial_x|$ corresponds to the Fourier multiplier $2\pi |k|$.
Symmetrizing in $k$ and $\ell$ and using the remarkable identity 
$$
\frac{1}{2} (k + \ell - |k-p| - |\ell -p| ) = \min(k,\ell,m,p),
$$
already noted by \cite{BH}, we obtain yet another formulation
\begin{equation}
\label{newformulaC2}
\mathcal{C}_\sigma[u] =  \sum_{\substack{p = k+\ell - m \\ k,\ell,m,p \geq 1}} e^{ipx} \left( \min(k,\ell,m,p) - \sigma \right) a_k a_\ell \overline{a_m} .
\end{equation}

By Lemma \ref{computation3}, the energy can be rewritten using the Fourier multiplier $\Lambda$ as
\begin{equation}
\label{newformulaE}
\mathcal{E}_\sigma[u] = \frac{1}{2\pi} \int_0^{2\pi} \left[ - 4 |u|^2 \Lambda |u|^2 + 2 |u|^2 (\overline{u} \Lambda u + u \Lambda \overline{u}) \right] \dd z - \frac{2\sigma}{\pi} \int_0^{2\pi} |u(x)|^4 \dd x .
\end{equation}
This form is more amenable to the subsequent analysis.

\subsection{Symmetries and invariants}

The symmetries of the filamentation equation are:
\begin{itemize}
\item space translation $u \mapsto u(x+x_0)$, $x_0 \in \mathbb{T}$;
\item phase rotation $u \mapsto e^{i \theta} u$, $\theta \in \mathbb{R}$;
\item scaling $u(t,x) \mapsto \lambda u(\lambda^2 t,x)$, $\lambda \in \mathbb{R}$; and
\item space-time reversal $u(t,x) \mapsto u(-t,-x)$.
\end{itemize}

The following quantities are conserved: 
\begin{itemize}
\item if $\sigma=1$, the first Fourier mode $\displaystyle a_1 = \frac{1}{2\pi} \int_0^{2\pi} u(x) e^{-ix} \dd x$;
\item the energy $\mathcal{E}_\sigma(u)$ of course;
\item the `momentum' ($L^2$ norm) $\displaystyle \mathcal{P}(u) = \int |u(x)|^2 \dd x$; and
\item the `mass' ($\dot H^{-\frac 12}$ norm) $\displaystyle \mathcal{M}(u) = \int \left| \partial_x^{-\frac 12} u \right|^2 \dd x$ (only defined if $a_0=0$, which can be assumed to hold without loss of generality, see \cite{DCG})
\end{itemize}

Let us briefly justify these conservation laws. The conservation of $a_1$ if $\sigma=1$ follows immediately from the right-hand side of equation \eqref{newformulaC}, which vanishes if $p=\sigma=1$ and $k \geq 1$. The conservation of the mass and momentum are most easily obtained from \eqref{newformulaC2} and symmetrization:
\begin{align*}
& \frac{d}{dt} \| a_k \|_{\ell^2}^2 = 2 \operatorname{Re} i \sum_{\substack{p = k+\ell - m \\ k,\ell,m,p \geq 1}} p \left( \min(k,\ell,m,p) - \sigma \right) a_k a_\ell \overline{a_m a_p} \\
& \qquad \qquad = \frac 1 2 \operatorname{Re} i \sum_{\substack{p = k+\ell - m \\ k,\ell,m,p \geq 1}} p \left( \min(k,\ell,m,p) - \sigma \right) a_k a_\ell \overline{a_m a_p} = 0, \\
& \frac{d}{dt} \| a_k \|_{\ell^2}^2 = 2 \operatorname{Re} i \sum_{\substack{p = k+\ell - m \\ k,\ell,m,p \geq 1}} \left( \min(k,\ell,m,p) - \sigma \right) a_k a_\ell \overline{a_m a_p} = 0.
\end{align*}

As was already noticed in \cite{BH}, the conserved quantities and symmetries are related via Noether's theorem; see Appendix \ref{symplecticappendix} for details.

\section{Local strong solutions}

\label{sectionstrong}

\begin{theorem}
The filamentation equation is locally well-posed in $H^s$ if $s> \frac 32$. More precisely, if $u_0 \in H^s$, there exists $T>0$ and a unique solution $u \in \mathcal{C}([0,T],H^s)$ to the Cauchy problem
$$
\begin{cases}
& \partial_t u = \partial_x \mathcal{C}_\sigma[u] \\
& u(t=0) = u_0.
\end{cases}
$$
\end{theorem}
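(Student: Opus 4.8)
The plan is to run the classical energy method for quasilinear equations. The key structural point, already visible in \eqref{newformulaC}, is that the order-one parts of $|u|^2\Lambda u$ and $u\Lambda|u|^2$ cancel, so that $\partial_x\mathcal{C}_\sigma$ behaves like a first-order transport operator with a \emph{real} principal part: no dispersive smoothing is needed (or available), and the natural threshold is $\partial_x u\in L^\infty$, that is $s>\tfrac32$ in one space dimension.

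\emph{The a priori estimate} is the heart of the matter: for a smooth solution one wants $\frac{d}{dt}\|u\|_{H^s}^2\lesssim C(\|u\|_{H^s})\|u\|_{H^s}^2$. Using \eqref{newformulaC2}, $\frac{d}{dt}\|u\|_{\dot H^s}^2$ is a quadrilinear sum over $k+\ell=m+p$ (with $k,\ell,m,p\ge 1$) of $a_k a_\ell\overline{a_m a_p}$ against the weight $p^{2s+1}(\min(k,\ell,m,p)-\sigma)$; symmetrising over $(k,\ell)\leftrightarrow(m,p)$, $k\leftrightarrow\ell$ and $m\leftrightarrow p$ replaces this weight by $\tfrac14(p^{2s+1}+m^{2s+1}-k^{2s+1}-\ell^{2s+1})(\min(k,\ell,m,p)-\sigma)$. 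On the constraint $k+\ell=m+p$, and because $x\mapsto x^{2s+1}$ is smooth, this symbol is one degree lower than the naive count: in the only delicate regime --- one input frequency comparable to the output frequency $\sim N$, with at most one further frequency $\lesssim N$ --- it is $O(N^{2s})$ instead of $O(N^{2s+1})$, precisely the gain of a commutator. Estimating the resulting multilinear expression by Hölder and Cauchy--Schwarz, the dangerous terms are bounded by $\|u\|_{W^{1,\infty}}\|u\|_{L^\infty}\|u\|_{\dot H^s}^2$, and the one-dimensional embedding $H^s\hookrightarrow W^{1,\infty}$ for $s>\tfrac32$ closes the estimate; the same conclusion follows by paralinearising $\partial_x\mathcal{C}_\sigma$ and noting that its principal symbol is real and first order, hence skew-adjoint to leading order. (It is exactly this bookkeeping --- refusing to treat the equation as second order --- that improves on the result quoted in \cite{Hunter}.) A continuity argument then yields a time $T\gtrsim\|u_0\|_{H^s}^{-2}$ and a bound $\sup_{[0,T]}\|u\|_{H^s}\le C(\|u_0\|_{H^s})$.

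For \emph{existence} I would regularise --- for instance solving $\partial_t u^N=Q^N\partial_x\mathcal{C}_\sigma[Q^N u^N]$, an ODE on $Q^N L^2$ handled by Cauchy--Lipschitz, or adding a vanishing viscosity $-\varepsilon\Lambda^2 u$ --- in such a way that the a priori estimate passes to the approximations uniformly (it is frequency-localised, hence insensitive to the symmetric mollifiers). This gives $u^N$ on a common interval $[0,T]$ with a uniform $H^s$ bound. Since \eqref{newformulaC2} shows $\partial_x\mathcal{C}_\sigma$ maps $H^s$ boundedly into $H^{s-2}$, the $\partial_t u^N$ are bounded in $L^\infty([0,T],H^{s-2})$, so Aubin--Lions--Simon furnishes a subsequence converging in $C([0,T],H^{s'})$ for every $s'<s$; taking $s'\in(\tfrac32,s)$ lets one pass to the limit in the equation and obtain $u\in L^\infty([0,T],H^s)\cap C([0,T],H^{s'})$. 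Uniqueness comes from a low-regularity difference estimate $\frac{d}{dt}\|u-v\|_{L^2}^2\lesssim C(\|u\|_{H^s},\|v\|_{H^s})\|u-v\|_{L^2}^2$ --- which closes because $s>\tfrac32$ makes the coefficients Lipschitz --- together with Grönwall. Strong continuity $u\in C([0,T],H^s)$ then follows either by a Bona--Smith argument (smoothing the data and using a sharpened difference estimate) or by upgrading the weak continuity already obtained using continuity of $t\mapsto\|u(t)\|_{H^s}$, which in turn follows from running the a priori estimate forwards and backwards in time (legitimate since \eqref{eqC} is time-reversible).

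The main obstacle is the a priori estimate: everything else is the standard regularise/compactness/Grönwall/Bona--Smith package, but the gain of one derivative in the symbol $p^{2s+1}+m^{2s+1}-k^{2s+1}-\ell^{2s+1}$ on $\{k+\ell=m+p\}$ --- equivalently, the fact that the principal part of $\partial_x\mathcal{C}_\sigma$ is skew-adjoint and first order modulo order zero --- is the analytic content of the ``remarkable identity'' for $\min(k,\ell,m,p)$ and of the reformulations \eqref{newformulaC}--\eqref{newformulaE}, and it is what separates \eqref{eqC} from a genuinely quasilinear Schrödinger equation.
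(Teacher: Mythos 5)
Your argument for the \emph{a priori} estimate --- symmetrizing the Fourier-space quadrilinear form over $k+\ell=m+p$ to produce the weight $p^{2s+1}+m^{2s+1}-k^{2s+1}-\ell^{2s+1}$ times $\min(k,\ell,m,p)-\sigma$, extracting the one-derivative commutator gain on the constraint set, and closing at $s>\frac32$ via an $\ell^1/W^{1,\infty}$ control of $\partial_x u$ --- is exactly the paper's proof, which confines itself to this estimate (for $\sigma=0$) and treats the remainder as standard. Your additional sketch of the regularization/compactness/uniqueness/Bona--Smith package is correct and simply fills in what the paper deliberately omits.
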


\begin{proof} We shall focus on the case $\sigma=0$ for simplicity, and only prove the \textit{a priori} estimate. Using the identity \eqref{newformulaC2} and symmetrizing in $k,\ell,m,p$, we find
\begin{align*}
\frac{d}{dt} \| u \|_{H^s}^2
& = (2\pi)^{2s+1} \operatorname{Re} i \sum_{k+\ell =m +p} p^{2s+1} \min(k,\ell,m,p) a_k a_\ell \overline{a_m a_p} \\
& = (2\pi)^{2s+1} \operatorname{Re} i \sum_{k+\ell =m +p} (p^{2s+1}+m^{2s+1}-k^{2s+1} - \ell^{2s+1}) \min(k,\ell,m,p) a_k a_\ell \overline{a_m a_p} .
\end{align*}
Without loss of generality, we can restrict the summation above to $\ell \leq k$ and $m \leq p$; as a result, $p \sim k$.
Then repeated applications of the Cauchy-Schwarz inequality combined with the hypothesis that $s>\frac 32$ give
$$
\left| \sum_{\substack{k+\ell =m +p \\ \ell \leq k, \, m \leq p}} \ell^{2s+1} \min(k,\ell,m,p) a_k a_\ell \overline{a_m a_p} \right| \leq
\sum_{\substack{k+\ell =m +p \\ \ell \leq k, \, m \leq p}} k^s p^s \ell k \, a_k a_\ell \overline{a_m a_p} \lesssim \| k^s a_k \|_{\ell^2}^2 \| k a_k \|_{\ell^1}^2
\lesssim \| k^s a_k \|_{\ell^2}^4.
$$
Symmetrically,
$$
\left| \sum_{\substack{k+\ell =m +p \\ \ell \leq k, \, m \leq p}} m^{2s+1} \min(k,\ell,m,p) a_k a_\ell \overline{a_m a_p} \right|  \lesssim \| k^s a_k \|_{\ell^2}^4.
$$

There remains to bound
$$
\left| \sum_{\substack{k+\ell =m +p \\ \ell \leq k, \, m \leq p}} (p^{2s+1} - k^{2s+1}) \min(k,\ell,m,p) a_k a_\ell \overline{a_m a_p} \right|.
$$
Using the fact that $(p^{2s+1} - k^{2s+1}) \min(k,\ell,m,p) \lesssim p^s k^s (\ell + m) \min(k,\ell,m,p)$, and proceeding as earlier, this can be bounded by
$$
\dots \lesssim \| k^s a_k \|_{\ell^2}^4.
$$
Overall, we have proved the \textit{a priori} estimate
$$
\frac{d}{dt} \| u \|_{H^s}^2 \lesssim \| u \|_{H^s}^4,
$$
which gives an \textit{a priori} local bound on $\| u \|_{H^s}$.
\end{proof}

\section{Global weak solutions}

\label{sectionweak}

For $u$ and $\varphi$ sufficiently regular and with positive spectrum, symmetrization gives the following identity
$$
\int \partial_x \mathcal{C}_\sigma (u) \overline{\varphi(x)} \d x
= - \frac{1}{8\pi} \iint \frac{|u(x) - u(y)|^2 (u(x) - u(y))\overline{(\partial_x \varphi(x) - \partial_x \varphi(y))} }{1 - \cos(x-y)} \dd x \dd y - \sigma \int |u(x)|^2 u(x) \varphi(x) \dd x.
$$
Notice that the right-hand side is well defined if $\varphi$ is smooth and $u \in W^{\frac 14 ,4}$; indeed,
$$
\left| \iint \frac{|u(x) - u(y)|^2 (u(x) - u(y))\overline{(\partial_x \varphi(x) - \partial_x \varphi(y))} }{1 - \cos(x-y)} \dd x \dd y \right| 
\lesssim \left\| \frac{\partial_x \varphi(x) - \partial_x \varphi(y)}{|x-y|} \right\|_{L^\infty_{x,y}} \left\| \frac{|u(x) - u(y)|^3}{|x-y|} \right\|_{L^1_{x,y}}.
$$

This leads to the following definition of a weak solution.

\begin{definition}
If $T \in (0,\infty]$, a function $u \in L^\infty_t([0,T], W^{\frac{1}{4},4})$ is a weak solution of \eqref{eqC} with data $u_0 \in W^{\frac{1}{4},4})$ if and only if
\begin{align*}
& \int_0^{2\pi} u_0 (x) \overline{\varphi(t=0,x)} \dd x + \int_0^T \int_0^{2\pi} u(t,x) \overline{\partial_t \varphi(t,x)} \dd x \dd t \\
& = - \frac{1}{8\pi} \int_0^T \iint_0^{2\pi} \frac{|u(x) - u(y)|^2 (u(x) - u(y))\overline{(\partial_x \varphi(x) - \partial_x \varphi(y))} }{1 - \cos(x-y)} \dd x \dd y \dd t - \sigma \int_0^T \int_0^{2\pi} |u(x)|^2 u(x) \overline{ \varphi(x)} \dd x \dd t.
\end{align*}
for any $\varphi(t,x)$ which is smooth, is compactly supported on $[0,T) \times \mathbb{T}$ and has a positive spectrum $\mathbb{P}_+ \varphi = \varphi$ (in the second line, we omitted the dependence on $t$ for sake of clarity).
\end{definition}

\begin{theorem}
If $u_0 \in W^{\frac{1}{4},4}(\mathbb{T})$, there exists a global weak solution $u \in L^\infty W^{\frac 14,4}$ of \eqref{eqC}.
\end{theorem}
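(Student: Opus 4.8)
The plan is to build $u$ by a compactness argument based on a spectral Galerkin (Fourier--truncation) scheme. For $N\geq 1$ let $V_N$ be the finite-dimensional space of trigonometric polynomials with spectrum in $[1,N]$, and solve the ODE $\partial_t u_N = Q^N\partial_x\mathcal{C}_\sigma[u_N]$ with $u_N(t=0)=Q^N u_0$ on $V_N$; since $Q^N u_N=u_N$ and $\mathcal{C}_\sigma$ preserves positivity of the spectrum, $u_N(t)\in V_N$ on its interval of existence, and the polynomial right-hand side gives local well-posedness. The conservation-law computations of Section \ref{sectionstructure} go through verbatim here: the symmetrizations only use invariance of the constraint set $\{k,\ell,m,p\}$ under permutation, which survives the cutoff $\{\,\cdot\leq N\}$, so $\mathcal{P}(u_N)$ and $\mathcal{M}(u_N)$ are conserved; and since $Q^N$ commutes with $\partial_x^{-1}$ and is $L^2$-self-adjoint, it is the $\omega$-orthogonal projection onto $V_N$, so the truncated flow is the Hamiltonian flow of $\mathcal{E}_\sigma|_{V_N}$ on $(V_N,\omega|_{V_N})$ and $\mathcal{E}_\sigma(u_N)$ is conserved as well. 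Conservation of $\mathcal{P}$ bounds the Fourier coefficients, hence the $u_N$ are global.

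Next I would prove the uniform bound $\sup_N\|u_N\|_{L^\infty_t W^{\frac14,4}}<\infty$. If $\sigma=0$ this is immediate from the coercivity recalled in Section \ref{sectionstructure}: for mean-zero functions $\mathcal{E}_0(u)\sim\|u\|_{W^{\frac14,4}}^4$, so $\|u_N(t)\|_{W^{\frac14,4}}^4\lesssim\mathcal{E}_0(u_N(t))=\mathcal{E}_0(Q^N u_0)\to\mathcal{E}_0(u_0)$. If $\sigma=1$ the focusing term $-\|\cdot\|_{L^4}^4$ must be absorbed, and the relevant interpolation inequality is $\|u\|_{L^4}^4\lesssim\mathcal{M}(u)^{1/2}\|u\|_{W^{\frac14,4}}^3$, which follows from the Littlewood--Paley square-function bound $\|u\|_{L^4}^2\lesssim\sum_j\|P_j u\|_{L^4}^2$, Bernstein's inequality $\|P_j u\|_{L^4}\lesssim 2^{j/4}\|P_j u\|_{L^2}$, and a high/low frequency split at the threshold balancing $2^{3j/2}\mathcal{M}(u)$ against $2^{-j/2}\|u\|_{W^{\frac14,4}}^2$. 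Young's inequality then gives $\|u\|_{L^4}^4\leq\varepsilon\|u\|_{W^{\frac14,4}}^4+C_\varepsilon\mathcal{M}(u)^2$, so $\|u\|_{W^{\frac14,4}}^4\lesssim\mathcal{E}_1(u)+\mathcal{M}(u)^2$; since $\mathcal{E}_1$ and $\mathcal{M}$ are conserved by the scheme and finite at $t=0$ (note $u_0\in W^{\frac14,4}(\mathbb{T})\hookrightarrow L^2(\mathbb{T})\hookrightarrow\dot H^{-\frac12}(\mathbb{T})$, using that $u_0$ has positive spectrum), the uniform bound follows.

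For compactness I would bound the time derivatives using the estimate underlying the weak formulation, $|\langle\partial_x\mathcal{C}_\sigma[u],\varphi\rangle|\lesssim\|u\|_{W^{\frac14,4}}^3\|\varphi\|_{C^2}+\sigma\|u\|_{L^4}^3\|\varphi\|_{L^\infty}$, which makes $\partial_t u_N=Q^N\partial_x\mathcal{C}_\sigma[u_N]$ bounded in $L^\infty_t H^{-M}$ for any $M>\tfrac52$. Because $W^{\frac14,4}(\mathbb{T})\hookrightarrow\hookrightarrow L^4(\mathbb{T})$ compactly (the critical case $\tfrac14\cdot4=1$ still allows compactness into $L^4$ on a bounded domain) and $L^4\hookrightarrow H^{-M}$, the Aubin--Lions--Simon lemma yields, along a subsequence, $u_N\to u$ strongly in $C([0,T];L^4)$ and weakly-$*$ in $L^\infty([0,T];W^{\frac14,4})$ for every finite $T$; a diagonal extraction over $T\to\infty$ makes this global, and $u\in L^\infty_t W^{\frac14,4}$ by weak-$*$ lower semicontinuity of the norm.

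It remains to pass to the limit in the weak formulation written for $u_N$ (which $u_N$ satisfies classically after integration by parts in $t$, with test function $Q^N\varphi$, still smooth and of positive spectrum), and this last step I expect to be the main obstacle, because the singular kernel $1/(1-\cos(x-y))$ must be absorbed by the \emph{critical} seminorm of $W^{\frac14,4}$ with nothing to spare. The linear terms converge by the strong $L^4$, resp. weak-$*$, convergence together with $Q^N\varphi\to\varphi$ in $C^\infty$, and $\sigma\int|u_N|^2u_N\overline{Q^N\varphi}$ converges by strong $L^4$ convergence; the uniform $L^\infty_t W^{\frac14,4}$ bound legitimizes dominated convergence in $t$. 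For the cubic Gagliardo term, the point is that the trilinear form is continuous under strong $L^4$ convergence of its arguments as long as they stay bounded in $W^{\frac14,4}$: writing $v_N:=u_N-u$, the elementary bound $\big||a|^2a-|b|^2b\big|\lesssim(|a|^2+|b|^2)|a-b|$ with $a=u_N(x)-u_N(y)$, $b=u(x)-u(y)$, together with $|\partial_x\varphi(x)-\partial_x\varphi(y)|\lesssim\|\varphi\|_{C^2}|x-y|$, controls the difference of cubic terms at fixed test function by
$$
\|\varphi\|_{C^2}\iint\frac{\big(|u_N(x)-u_N(y)|^2+|u(x)-u(y)|^2\big)\,|v_N(x)-v_N(y)|}{|x-y|}\,\dd x\,\dd y .
$$
Estimating $|v_N(x)-v_N(y)|\leq|v_N(x)|+|v_N(y)|$, and using Cauchy--Schwarz in $y$ followed by Jensen to show that $x\mapsto\int|u(x)-u(y)|^2/|x-y|\,\dd y$ lies in $L^{4/3}$ with norm $\lesssim\|u\|_{W^{\frac14,4}}^2$ (the Hölder split being exactly $\tfrac34+\tfrac14=1$, which is where the extra derivative carried by $\partial_x\varphi$ is spent), the above is $\lesssim\|v_N\|_{L^4}\big(\|u_N\|_{W^{\frac14,4}}^2+\|u\|_{W^{\frac14,4}}^2\big)\to 0$. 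The residual discrepancy from replacing $\varphi$ by $Q^N\varphi$ is handled the same way, using $\iint|w(x)-w(y)|^3/|x-y|\,\dd x\,\dd y\lesssim\|w\|_{W^{\frac14,4}}^3$ and $Q^N\varphi\to\varphi$ in $C^2$. This identifies $u$ as a global weak solution.
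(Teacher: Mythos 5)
Your proposal is correct and follows the same overall skeleton as the paper's proof: Fourier truncation/Galerkin regularization, globality of the truncated flows from the conserved quantities, a uniform bound in $L^\infty_t W^{\frac 14,4}$ from energy conservation, an $H^{-M}$ bound on $\partial_t u_N$ feeding into Aubin--Lions, and passage to the limit in the symmetrized weak formulation. Two steps are handled genuinely differently, and in both your version is more detailed. First, for the convergence of the cubic Gagliardo term the paper splits the double integral into a $\delta$-neighbourhood of the diagonal (made small uniformly in $N$ by H\"older, since $\varphi$ is smooth) and its complement (where the kernel is bounded and strong $L^{10}$ convergence applies); you instead prove a quantitative trilinear continuity estimate, controlling the difference of cubic terms by $\|u_N-u\|_{L^4}\bigl(\|u_N\|_{W^{1/4,4}}^2+\|u\|_{W^{1/4,4}}^2\bigr)$ via the pointwise inequality $\bigl||a|^2a-|b|^2b\bigr|\lesssim(|a|^2+|b|^2)|a-b|$ and the $L^{4/3}_x$ bound on $\int|u(x)-u(y)|^2/|x-y|\,\dd y$. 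This is correct --- the Cauchy--Schwarz/Jensen computation closes exactly at the $\frac 34+\frac 14$ H\"older split --- and it buys you a single-limit argument in $N$ in place of the paper's two-parameter $\delta,N$ limit. Second, you explicitly address the coercivity of $\mathcal{E}_1$, which the paper elides by writing the argument for $\sigma=0$ only: your interpolation inequality $\|u\|_{L^4}^4\lesssim\mathcal{M}(u)^{1/2}\|u\|_{W^{1/4,4}}^3$, obtained from Bernstein and a high/low frequency split, absorbs the focusing term using the conserved mass and gives $\|u\|_{W^{1/4,4}}^4\lesssim\mathcal{E}_1(u)+\mathcal{M}(u)^2$; an alternative closer to the paper's Lemma \ref{lemmanonnegative} would be to use $\mathcal{E}_1(u)=\mathcal{E}_0(e^{-ix}u)$ together with conservation of $\mathcal{P}$ to control the first mode, but your route is equally valid and is a genuine addition. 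The remaining deviations (using conservation of $\mathcal{P}$ rather than of the energy for globality of the finite-dimensional ODE, and $L^4$ rather than $L^{10}$ as the intermediate compactness space) are immaterial.
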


\begin{proof} \underline{Step 1: regularization of the equation.} Define the regularized energy
$$
\mathcal{E}_\sigma^N(u) = \mathcal{E}_\sigma (Q^N u), \qquad \mbox{where $Q^N = \mathbf{1}_{[0,N]}$.}
$$
It generates the Hamiltonian flow
$$
\partial_t u = \partial_x Q^N \mathcal{C}_\sigma (Q^N u).
$$
The solution associated with the data $u_0$ will be denoted $u^N$. Observe that $(\operatorname{Id} - Q^N) u$ is constant, so that $u^N$ is effectively finite-dimensional, and therefore global in time by virtue of the global bound provided by conservation of the energy.

\medskip

\noindent \underline{Step 2: compactness.} By energy conservation, $u^N$ is uniformly bounded in $L^\infty ([0,\infty), W^{\frac 14,4}(\mathbb{T}))$. Furthermore, we claim that
$$
\| \mathcal{C}_\sigma (u) \|_{H^{-1}} \lesssim \| u \|_{W^{\frac 14,4}}^3.
$$
By symmetrization, H\"older's inequality, the Gagliardo characterization of $W^{\frac 14,4}$, and Sobolev embedding, we have
\begin{align*}
\left| \int \mathcal{C}_0 (u)(x) \overline{g(x)} \dd x \right| & = \left| \frac{1}{8\pi} \iint \frac{|u(x) - u(y)|^2 (u(x) - u(y))\overline{(g(x) - g(y))} }{1 - \cos(x-y)} \dd x \dd y \right| \\
& \lesssim \left\| \frac{|u(x) - u(y)|^3}{|x-y|^{\frac 32}} \right\|_{L^{\frac 43}} \left\| \frac{g(x) - g(y)}{|x-y|^{\frac 12}} \right\|_{L^4} \lesssim  \| u \|_{W^{\frac 14,4} }^3 \| g \|_{W^{\frac 14,4} } \lesssim  \| u \|_{W^{\frac 14,4} }^3 \| g \|_{H^{\frac 12}}.
\end{align*}
Here, $H^{\frac 12}$ is the classical $L^2$-based Sobolev space, which we fall back to in order to avoid technicalities linked to fractional Sobolev spaces with $p \neq 2$.

Interpreting the above inequality by duality and using $L^p$ boundedness of the Fourier multiplier $Q^N$, we have
$$
\| \partial_x Q^N  \mathcal{C}_0(Q^N u) \|_{H^{-{\frac 32}}} \lesssim \| \mathcal{C}_0(Q^N u) \|_{H^{-{\frac 12}}} \lesssim \| Q^N u \|_{W^{\frac 14,4} }^3.
$$
While we have focused on $\mathcal{C}_0$ for notational simplicity, the bound above remains true for $\mathcal{C}_1$.

By the equation satisfied by $u^N$, this gives a uniform bound for $\partial_t u^N$ in $L^\infty H^{-{\frac 32}}$; recall furthermore the uniform bound in $L^\infty W^{\frac 14,4}$. By the Rellich-Kondrakov theorem, $W^{\frac 14,4}$ embeds compactly in $L^{10}$, and by the Sobolev embedding theorem, $L^{10}$ embeds continuously in $H^{-\frac 32}$. Thus, the Aubin-Lions lemma gives compactness of $(u^N)$ in $L^\infty L^{10}$. We select a subsequence converging in $L^\infty L^N$, and still denote it $(u^N)$ for simplicity.

Finally, the sequence $(u^N)$ is uniformly bounded in $L^\infty_t W^{\frac 14,4}$. Therefore, by weak compactness of this space, we can also assume (up to extracting a new subsequence) that $(u^N)$ converges to $u$ weakly in $L^\infty_t W^{\frac 14,4}$. This follows by reflexivity of $W^{\frac 14,4}$, see \cite{Lemarie,Triebel}.

\medskip

\noindent \underline{Step 3: passing to the limit in the equation.} The weak form of the equation satisfied by $u^N$ is
\begin{align*}
& \int_0^{2\pi} u_0 (x) \overline{\varphi(t=0,x)} \dd x + \int_0^\infty \int_0^{2\pi} u^N(t,x) \overline{\partial_t \varphi(t,x)} \dd x \dd t \\
& \quad = - \frac{1}{8\pi} \int_0^\infty \iint_0^{2\pi} \frac{|Q^N u^N(x) - Q^N u^N(y)|^2 (Q^N u^N(x) - Q^N u^N(y))\overline{(Q^N \partial_x \varphi(x) - Q^N \partial_x \varphi(y))} }{1 - \cos(x-y)} \dd x \dd y \dd t.
\end{align*}
(Once again, we have focused on the case $\sigma=0$ for the sake of clarity.) Passing to the limit in the first line is immediate, so that we turn to the second line, which we split into two pieces:
\begin{align*}
& \int_0^\infty \iint_0^{2\pi} \frac{|Q^N u^N(x) - Q^N u^N(y)|^2 (Q^N u^N(x) - Q^N u^N(y))\overline{(Q^N \partial_x \varphi(x) - Q^N \partial_x \varphi(y))} }{1 - \cos(x-y)} \dd x \dd y \dd t \\
& \qquad = \int_0^T \iint_{S_\delta} \dots  \dd x \dd y \dd t + \int_0^T \iint_{\mathbb{T} \setminus S_\delta} \dots  \dd x \dd y \dd t,
\end{align*}
where $S_\delta$ is the $\delta$-neighbourhood of the diagonal: $S_\delta = \{ (x,y) \in \mathbb{T}^2, \, |x-y| < \delta \}$. The first piece goes to zero with $\delta$, as follows by H\"older's inequality:
\begin{align*}
| I_\delta | \leq \left\| \frac{|Q^N u^N(x) - Q^N u^N(y)|^3}{|x-y|^{\frac 32}} \right\|_{L^{\frac 43}} \left\| \frac{Q^N \partial_x \varphi(x) - Q^N \partial_x \varphi(y)}{|x-y|^{\frac 12}} \right\|_{L^8} \| \mathbf{1}_{S_\delta} \|_{L^8} \lesssim \| u^N \|_{W^{\frac 14,4}} \| \partial_x \varphi \|_{W^{\frac 38,8}} \delta^{\frac 18}.
\end{align*}
By compactness in $L^\infty L^{10}$ and convergence of $Q^N \partial_x \varphi$ to $\partial_x \varphi$ in $L^p$, the second piece has the desired limit as $N \to \infty$:
\begin{align*}
II_\delta \overset{N \to \infty}{\longrightarrow} \int_0^\infty \iint_0^{2\pi} \frac{|u(x) - u(y)|^2 (u(x) - u(y))\overline{( \partial_x \varphi(x) -  \partial_x \varphi(y))} }{1 - \cos(x-y)} \dd x \dd y \dd t.
\end{align*}
Combining the bound on $I_\delta$ and the limit of $II_\delta$ gives the desired result.
\end{proof}

\section{Variational aspects}

\label{sectionvariational}

\subsection{Sign of the energy}

\begin{lemma}
\label{lemmanonnegative}
\begin{itemize}
\item[(i)] If $u$ has a non-negative spectrum ($\mathbb{P}u=u$), then $\mathcal{E}_0(u) \geq 0$, and it is only zero if $u =C$, $C\in \mathbb{C}$.
\item[(ii)] If $u$ has a positive spectrum ($\mathbb{P}_+u=u$), then $\mathcal{E}_1(u) \geq 0$, and it is only zero if $u = Ce^{ix}$, $C\in \mathbb{C}$.
\end{itemize}
\end{lemma}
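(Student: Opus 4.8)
The plan is to reduce both statements to a manifestly non-negative Fourier-space form of the energy. For $u=\sum_{k\geq 1}a_k e^{ikx}$ with finite energy I would first record the identity
$$
\mathcal{E}_\sigma(u)\;=\;4\!\!\sum_{\substack{k+\ell=m+p\\ k,\ell,m,p\geq 1}}\!\!\bigl(\min(k,\ell,m,p)-\sigma\bigr)\,a_k a_\ell\,\overline{a_m a_p}\;=\;\frac{2}{\pi}\sum_{j\geq 1+\sigma}\bigl\|\mathbb{P}_{\geq j}u\bigr\|_{L^4}^4,
$$
where $\mathbb{P}_{\geq j}u:=\sum_{k\geq j}a_k e^{ikx}$. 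The first equality comes from pairing the Fourier expression \eqref{newformulaC2} of $\mathcal{C}_\sigma[u]$ with $u$ in $L^2$ and using the Hamiltonian structure together with the scaling identity for the quartic energy (so that $\mathcal{E}_\sigma(u)$ is a fixed positive multiple of $\int_0^{2\pi}\mathcal{C}_\sigma[u]\,\overline{u}\,\dd x$, the precise constant being irrelevant), or, if one prefers, from a direct Fourier expansion of the Gagliardo kernel combined with the identity $\tfrac12(k+\ell-|k-p|-|\ell-p|)=\min(k,\ell,m,p)$ of \cite{BH}. The second equality uses the elementary decomposition $\min(k,\ell,m,p)-\sigma=\sum_{j\geq 1+\sigma}\mathbf{1}_{\{k\geq j\}}\mathbf{1}_{\{\ell\geq j\}}\mathbf{1}_{\{m\geq j\}}\mathbf{1}_{\{p\geq j\}}$ (valid because all four indices are $\geq 1$, hence $\min\geq 1$), followed by an interchange of summations and the Parseval identity $\sum_{k+\ell=m+p,\ k,\ell,m,p\geq j}a_k a_\ell\overline{a_m a_p}=\tfrac1{2\pi}\|(\mathbb{P}_{\geq j}u)^2\|_{L^2}^2=\tfrac1{2\pi}\|\mathbb{P}_{\geq j}u\|_{L^4}^4$.

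Granting this representation, both parts follow at once. Non-negativity is obvious since each term on the right is a fourth power. If $\mathcal{E}_\sigma(u)=0$, then $\mathbb{P}_{\geq j}u=0$ for every $j\geq 1+\sigma$, and taking $j=1+\sigma$ gives $\mathbb{P}_{\geq 1+\sigma}u=0$. In case (ii), $u$ has positive spectrum, so this forces $u=a_1 e^{ix}=Ce^{ix}$; conversely $\mathcal{E}_1(Ce^{ix})=0$, since the Gagliardo term and $\tfrac{2}{\pi}\|Ce^{ix}\|_{L^4}^4$ both equal $4|C|^4$. In case (i) one notes that $\mathcal{E}_0$ does not depend on the zeroth Fourier mode, so $\mathcal{E}_0(u)=\mathcal{E}_0(\mathbb{P}_+u)=\tfrac{2}{\pi}\sum_{j\geq 1}\|\mathbb{P}_{\geq j}u\|_{L^4}^4$ holds also for $u$ with merely non-negative spectrum; then $\mathcal{E}_0(u)=0$ forces $\mathbb{P}_{\geq 1}u=0$, i.e. $u$ is constant, and conversely $\mathcal{E}_0(C)=0$ trivially. (Part (i) can in fact be read off directly from the definition, since $1-\cos(x-y)\geq 0$ and is strictly positive off the diagonal, so $\mathcal{E}_0(u)\geq 0$ with equality only when $u(x)=u(y)$ for a.e. $(x,y)$.)

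The one point needing care is the justification of the displayed identity on the relevant function space. The first equality and the $\min$-decomposition are purely algebraic, but interchanging the $j$-sum with the $(k,\ell,m,p)$-sum is not automatic, since the quadruple sum need not converge absolutely when $u$ lies only in $W^{\frac14,4}$. I would therefore establish the identity first for $u$ with finitely supported spectrum, where every sum is finite, and then pass to general $u$ by density together with the continuity of $\mathcal{E}_\sigma$ on $W^{\frac14,4}$ and the monotone convergence of the partial sums $\sum_{1+\sigma\leq j\leq J}\|\mathbb{P}_{\geq j}u\|_{L^4}^4$. This approximation step is the only mildly technical ingredient; everything else is immediate from the Fourier representation.
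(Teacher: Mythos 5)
Your proof is correct, but it takes a genuinely different route from the paper. The paper's argument is two lines: part (i) is declared obvious, since for $\sigma=0$ the energy is the integral of the manifestly non-negative quantity $|u(x)-u(y)|^4/(1-\cos(x-y))$, vanishing only if $u$ is constant (this is exactly your closing parenthetical remark); part (ii) is then deduced from part (i) via the modulation identity $\mathcal{E}_1(u)=\mathcal{E}_0(e^{-ix}u)$ for $\mathbb{P}_+u=u$, which is read off from the Fourier form \eqref{newformulaE} of the energy (shifting $a_k\mapsto a_{k+1}$ turns $\min(k,\ell,m,p)-1$ into $\min$ over non-negative indices). Your route instead establishes the stronger sum-of-fourth-powers decomposition $\mathcal{E}_\sigma(u)=\frac{2}{\pi}\sum_{j\geq 1+\sigma}\|\mathbb{P}_{\geq j}u\|_{L^4}^4$, obtained from the layer-cake identity $\min(k,\ell,m,p)-\sigma=\sum_{j\geq 1+\sigma}\mathbf{1}_{\{k,\ell,m,p\geq j\}}$ and Parseval; I checked the constant (e.g. for $u=e^{ix}+e^{2ix}$, $\sigma=0$, both sides equal $28$), and your treatment of the equality cases and of the zeroth mode in (i) is sound. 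What each approach buys: the paper's is essentially free and requires no justification of convergence, whereas yours needs the density/monotone-convergence step you flag (which does go through, using the uniform $L^4$-boundedness of the sharp cutoffs $Q^N$ to dominate) but yields a quantitative coercivity statement of independent interest --- it exhibits $\mathcal{E}_\sigma$ as a sum of non-negative terms and immediately identifies the kernel as $\mathbb{P}_{\geq 1+\sigma}u=0$. If you only want the lemma as stated, note that your own last remark plus the one-line modulation identity reproduces the paper's shorter proof.
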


\begin{proof}
The first assertion is obvious. We learn from 
\eqref{newformulaE} that
$$
\mbox{if} \; \mathbb{P}_+ u =u, \qquad \mathcal{E}_0(e^{-ix}u) = \mathcal{E}_1(u),
$$
which implies the second assertion.
\end{proof}

\subsection{Weak compactness and lower semi-continuity of the energy}

We provide a direct proof of weak compactness and lower semi-continuity for the energy which avoids any functional analytic prerequisite, except for the characterization $\mathcal{E}_0(u) \sim \| u \|_{W^{\frac 14,4}}$. 

\begin{theorem}
\label{LSC}
Assume that $(u_n)$ is a sequence in $W^{\frac 14,4}$ such that $\mathcal{E}_0(u_n)$ is decreasing. Then there exists a subsequence (which we still denote $(u_n)$ for simplicity) and a function $u \in W^{\frac 14,4}$ such that, for any $\varphi \in \mathcal{C}^\infty$
$$
\int u_n(x) \varphi(x) \dd x \overset{n \to \infty}{\longrightarrow} \int u(x) \varphi(x) \dd x ,
$$
and furthermore
$$
\mathcal{E}_0(u) \leq \lim_{n \to \infty} \mathcal{E}_0(u_n).
$$
\end{theorem}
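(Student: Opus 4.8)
The plan is to obtain weak compactness essentially for free at the level of Fourier coefficients, to upgrade it to strong convergence using the frequency-localized form of the energy, and then to apply Fatou's lemma directly to the Gagliardo double integral so that no constant is lost. I take the $u_n$ to have mean zero, as elsewhere in the paper, so that the characterization $\mathcal{E}_0(v)\sim\|v\|_{W^{1/4,4}}^4\sim\sum_{j\ge1}2^j\|P_jv\|_{L^4}^4$ controls the full norm. Since $\mathcal{E}_0(u_n)$ is decreasing and $\mathcal{E}_0\ge0$ (Lemma~\ref{lemmanonnegative}), the sequence $\mathcal{E}_0(u_n)$ converges and $\mathcal{E}_0(u_n)\le\mathcal{E}_0(u_1)=:C_0$. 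By the characterization, $(u_n)$ is bounded in $W^{1/4,4}$, hence in $L^4$ and in $L^2$, so the Fourier coefficients satisfy $\sup_{n,k}|a_k^{(n)}|<\infty$. A diagonal extraction produces a subsequence, still denoted $(u_n)$, along which $a_k^{(n)}\to a_k$ for every $k\ge1$; set $u:=\sum_{k\ge1}a_ke^{ikx}$. Testing against $\varphi\in\mathcal{C}^\infty$ turns $\int u_n\varphi$ into a series in the Fourier coefficients which converges by dominated convergence (the coefficients of $\varphi$ are summable and the $a_k^{(n)}$ are uniformly bounded), giving $\int u_n\varphi\to\int u\varphi$. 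Since each block $P_ju_n$ involves finitely many modes whose coefficients converge, $P_ju_n\to P_ju$ in $L^4$, and Fatou over $j$ gives $\sum_j2^j\|P_ju\|_{L^4}^4\le\liminf_n\sum_j2^j\|P_ju_n\|_{L^4}^4\lesssim C_0$, so $u\in W^{1/4,4}$.

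The real work is the upgrade to strong convergence. Using $\|P_jv\|_{L^2}\lesssim\|P_jv\|_{L^4}$ on $\mathbb{T}$, the $L^p$-boundedness of the cut-offs $Q^K$ uniformly in $K$, and H\"older's inequality in $j$, I get, for $v$ equal to $u$ or to any $u_n$ and for every $K$,
$$
\bigl\|(\mathrm{Id}-Q^K)v\bigr\|_{L^4}\ \lesssim\ \sum_{2^j\gtrsim K}\|P_jv\|_{L^4}\ \le\ \Bigl(\sum_{j\ge1}2^j\|P_jv\|_{L^4}^4\Bigr)^{1/4}\Bigl(\sum_{2^j\gtrsim K}2^{-j/3}\Bigr)^{3/4}\ \lesssim\ C_0^{1/4}\,K^{-1/4},
$$
so the high-frequency tails of the $u_n$ are equi-small. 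Since $Q^Ku_n\to Q^Ku$ in every norm for fixed $K$, splitting $u_n-u=(\mathrm{Id}-Q^K)(u_n-u)+Q^K(u_n-u)$ and letting $n\to\infty$ then $K\to\infty$ shows $u_n\to u$ in $L^4$, hence, along a further subsequence, $u_n\to u$ almost everywhere on $\mathbb{T}$.

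It then only remains to conclude: along this subsequence $\frac{|u_n(x)-u_n(y)|^4}{1-\cos(x-y)}\to\frac{|u(x)-u(y)|^4}{1-\cos(x-y)}$ for a.e. $(x,y)\in\mathbb{T}^2$, and the integrand is nonnegative, so Fatou's lemma yields $\mathcal{E}_0(u)\le\liminf_n\mathcal{E}_0(u_n)=\lim_n\mathcal{E}_0(u_n)$. It is essential that Fatou be applied to the Gagliardo double integral and not to the Littlewood--Paley square function, since only the former carries the sharp constant $1$.

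The main obstacle is precisely the strong-convergence step: the coefficientwise convergence must be promoted to convergence in $L^4$ (or at least in $L^2$), i.e. compactness must be produced by hand. The mechanism that makes this go through is that $W^{1/4,4}=B^{1/4}_{4,4}$ sits just below the borderline exponent, which is exactly what makes the H\"older sum $\sum_{2^j\gtrsim K}2^{-j/3}$ converge and hence the frequency tails uniformly small. One could instead invoke the compact embedding $W^{1/4,4}\hookrightarrow L^4$, but the point of the statement is to avoid such black boxes, using only the stated norm-equivalence together with Fatou and an elementary diagonal extraction.
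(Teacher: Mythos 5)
Your proof is correct, and it reaches the lower semi-continuity by a genuinely different route than the paper. The weak-compactness step (bounded Fourier coefficients, diagonal extraction, Fatou over the Littlewood--Paley blocks to place $u$ in $W^{\frac14,4}$) is essentially the paper's Step 1. For Step 2, however, the paper keeps only weak convergence and proves lower semi-continuity via convexity: it splits $u_n$ into low, middle and high frequency bands, applies the pointwise inequality $|w|^4\ge |z|^4+4|z|^2\operatorname{Re}\overline{z}(w-z)$ to the Gagliardo integrand, and kills the cross term by choosing the frequency cut-offs in the right order. You instead upgrade the convergence to strong convergence in $L^4$ by showing that the high-frequency tails are equi-small --- your H\"older estimate $\sum_{2^j\gtrsim K}\|P_jv\|_{L^4}\lesssim C_0^{1/4}K^{-1/4}$ is a correct hands-on proof of the compactness of $W^{\frac14,4}\hookrightarrow L^4$ --- and then apply Fatou to the (nonnegative) Gagliardo integrand along an a.e.\ convergent sub-subsequence; your remark that Fatou must be applied to the double integral itself, and not to the equivalent Littlewood--Paley norm, is exactly the right point, since it is what preserves the constant in $\mathcal{E}_0(u)\le\lim\mathcal{E}_0(u_n)$. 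Both arguments use only the norm equivalence $\mathcal{E}_0(v)\sim\|v\|_{W^{1/4,4}}^4$, which is what the paper wanted to achieve. The trade-off: your route exploits the strict positivity of the regularity index (it is a compactness argument and would not survive in a setting with only weak convergence), whereas the paper's convexity argument is the standard mechanism for weak lower semi-continuity and does not need any strong convergence; in exchange, your route yields strong $L^4$ convergence of the minimizing sequence as a by-product, which is precisely what Proposition \ref{constrainedmin} later has to obtain separately from Rellich--Kondrachov in order to pass to the limit in the constraints $\mathcal{M}$ and $\mathcal{P}$. (One small housekeeping point: the hypothesis that the $u_n$ have mean zero, which you assume so that $\mathcal{E}_0$ controls the full $W^{\frac14,4}$ norm, is not in the statement but is implicit in the paper's own proof and automatic in the application where $\mathbb{P}_+u_n=u_n$.)
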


\begin{proof} \underline{Step 1: weak compactness.} Recall that $Q^N$ is the Fourier multiplier with symbol $\mathbf{1}_{[1,N]}(k)$. For any $N \in \mathbb{N}$, the sequence $u^N_n = Q^N u_n$ is compact in $L^4$; hence, by a diagonal argument, we can find Fourier coefficients $(a_k)$  and a subsequence (in $n$) such that $u^N_n \rightarrow \sum_{k=1}^N a_k e^{ikx}$ as $n \to \infty$ for any $N$. This implies that $\sum_{k=1}^N a_k e^{ikx}$ is uniformly (in $N$) bounded in $W^{\frac 14,4}$. From this, it is not hard to deduce that $u = \sum_{k=1^\infty} a_k e^{ikx}$ belongs to $W^{\frac{1}{4},4}$. 

Splitting $u_n$ into $Q^N u_n$ and $(1-Q^N) u_n$, one readily establishes that
$$
\int u_n \varphi(x) \dd x \overset{n \to \infty}{\longrightarrow} \int u(x) \varphi(x) \dd x.
$$

\medskip

\noindent \underline{Step 2: lower semi-continuity.} We will now split $u_n$ into a low, a middle, and a high frequency part
\begin{align*}
u_n = Q^N u_n + (Q^M - Q^N) u_n + (1-Q^M) u_n = u_n^{\operatorname{low}} + u_n^{\operatorname{mid}} +  u_n^{\operatorname{high}},
\end{align*}
with a similar decomposition for $u$. This decomposition depends on two parameters, 
N and M, 
which we fix as follows. First, we suppose we have an $\epsilon>0$, and $N$ such that
\begin{equation}
\label{choiceN}
\| u - u^{\operatorname{low}}\|_{W^{\frac 14,4}} < \epsilon,
\end{equation}
and finally $M$ such that 
\begin{equation}
\label{choiceM}
\iint \frac{|u^{\operatorname{low}}(x) - u^{\operatorname{low}}(y)|^2}{1 - \cos(x-y)} \operatorname{Re} \left[(\overline{u^{\operatorname{low}}(x) - u^{\operatorname{low}}(y) })(f^{\operatorname{high}}(x) - f^{\operatorname{high}}(y))\right] \dd x \dd y < \epsilon \qquad \mbox{if $\| f \|_{W^{\frac 14,4}} < 1$}
\end{equation}
(the existence of such an $M$ is a consequence of the fact that $\frac{|u^{\operatorname{low}}(x) - u^{\operatorname{low}}(y)|^2 (\overline{u^{\operatorname{low}}(x) - u^{\operatorname{low}}(y) })}{1 - \cos(x-y)} $ is a given smooth function after $N$ has been fixed).

Once $N$ and $M$ have been fixed, we choose a constant $K$ such that, if $n>K$, then 
\begin{equation}
\label{coccinelle}
\begin{split}
& \| u^{\operatorname{low}}_n - u^{\operatorname{low}} \|_{W^{\frac 14,4}} < \epsilon \\
& \| u^{\operatorname{mid}}_n \|_{W^{\frac 14,4}} \lesssim \epsilon
\end{split}
\end{equation}
(such a $K$ exists since $u^{\operatorname{low}}_n$ and $u^{\operatorname{mid}}_n$ converge to $u^{\operatorname{low}}$ and $u^{\operatorname{mid}}$ respectively, as $n \to \infty$).

Now that all the parameters are fixed, we proceed with the inequalities. By H\"older's inequality and \eqref{coccinelle}, if $n>K$,
\begin{align*}
\mathcal{E}_0(u_n) =  \mathcal{E}_0(u_n^{\operatorname{low}} - u^{\operatorname{low}} +u^{\operatorname{low}} + u_n^{\operatorname{mid}} +  u_n^{\operatorname{high}})
=  \mathcal{E}_0(u^{\operatorname{low}} + u_n^{\operatorname{high}}) + O(\epsilon).
\end{align*}
By convexity, for any $w,z \in \mathbb{C}$, $|w|^4 \geq |z|^4 + 4 |z|^2 \operatorname{Re} \overline{z}(w-z)$. Combining this inequality with \eqref{choiceM}, we have
\begin{align*}
\mathcal{E}_0(u^{\operatorname{low}} + u_n^{\operatorname{high}}) & \geq \mathcal{E}_0(u^{\operatorname{low}}) + \iint \frac{|u^{\operatorname{low}}(x) - u^{\operatorname{low}}(y)|^2}{1 - \cos(x-y)} \operatorname{Re} \left[(\overline{u^{\operatorname{low}}(x) - u^{\operatorname{low}}(y) })(u_n^{\operatorname{high}}(x) - u_n^{\operatorname{high}}(y))\right] \dd x \dd y \\
& = \mathcal{E}_0(u^{\operatorname{low}}) + O(\epsilon).
\end{align*}
Finally, it follows from \eqref{choiceN} that
$$
\mathcal{E}_0(u^{\operatorname{low}}) = \mathcal{E}_0(u) + O(\epsilon).
$$
Overall, we proved that, for any $\epsilon>0$, there exists $K$ such that, if $n>K$,
$$
\mathcal{E}_0(u_n) \geq \mathcal{E}_0(u) + O(\epsilon).
$$
This gives the desired result!
\end{proof}

\subsection{Constrained minimization}

\begin{proposition} \label{constrainedmin}
If $\mathcal{M}^*, \mathcal{P}^* > 0$, let
$$
\mathcal{E}^*_\sigma = \mathcal{E}_\sigma(\mathcal{M}^*, \mathcal{P}^*) = \inf \{ \mathcal{E}_\sigma(u), \; u \in W^{\frac 14,4}, \; \mathbb{P}_+ u = u, \; \mathcal{M}(u) = \mathcal{M}^*, \; \mathcal{P}(u) = \mathcal{P}^* \} .
$$
Then this infimum is reached at a minimizer.
The same holds true if only the momentum or only the mass is fixed.
\end{proposition}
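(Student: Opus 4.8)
The plan is to run the direct method in the calculus of variations. What makes it go through here without any concentration-compactness argument is that, on the compact circle $\mathbb{T}$, the two constrained quantities are controlled \emph{compactly} by the energy: recall $\mathcal{E}_0(u)\sim\|u\|_{W^{\frac14,4}}^4$, while $\mathcal{M}$ and $\mathcal{P}$ are respectively a squared $\dot H^{-\frac12}$-norm and a squared $L^2$-norm, and $W^{\frac14,4}(\mathbb{T})$ embeds compactly into $L^2(\mathbb{T})$ (this is the embedding $W^{\frac14,4}\hookrightarrow\hookrightarrow L^{10}$ already invoked in Section~\ref{sectionweak}, followed by $L^{10}(\mathbb{T})\hookrightarrow L^2(\mathbb{T})$). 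First I would note that $\mathcal{E}^*_\sigma$ is a well-defined nonnegative real number: the admissible set is nonempty (explicit trigonometric polynomials supported on a few low modes realize the prescribed values), and $\mathcal{E}_\sigma\ge0$ on functions with positive spectrum by Lemma~\ref{lemmanonnegative}. Fix a minimizing sequence $(u_n)$; after extracting a subsequence we may assume $\mathcal{E}_\sigma(u_n)$ is nonincreasing with limit $\mathcal{E}^*_\sigma$.

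The next step is coercivity: $(u_n)$ is bounded in $W^{\frac14,4}$. For $\sigma=0$ this is immediate from $\mathcal{E}_0(u)\sim\|u\|_{W^{\frac14,4}}^4$ --- the positive-spectrum hypothesis forces zero mean, so the Gagliardo characterization recalled in Section~\ref{sectionstructure} applies without the usual mean-zero proviso. For $\sigma=1$ I would use the identity $\mathcal{E}_1(u)=\mathcal{E}_0(e^{-ix}u)$ valid for positive-spectrum $u$ (it follows from \eqref{newformulaE}; cf. the proof of Lemma~\ref{lemmanonnegative}): this bounds $e^{-ix}u_n$ in $W^{\frac14,4}$ modulo its mean, i.e. modulo the first Fourier coefficient $a_1^{(n)}$ of $u_n$, and $|a_1^{(n)}|$ is under control because $|a_1^{(n)}|^2\lesssim\mathcal{P}(u_n)=\mathcal{P}^*$ (respectively $\lesssim\mathcal{M}(u_n)=\mathcal{M}^*$ in the mass-only case). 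Since multiplication by $e^{\pm ix}$ is bounded on $W^{\frac14,4}$, boundedness of $(u_n)$ there follows.

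Now I would extract, using the compact embedding above, a further subsequence with $u_n\to u$ strongly in $L^2$; the uniform $W^{\frac14,4}$ bound --- equivalently Theorem~\ref{LSC} --- gives $u\in W^{\frac14,4}$, still with positive spectrum, together with $\mathcal{E}_0(u)\le\liminf_n\mathcal{E}_0(u_n)$ (when $\sigma=1$, apply Theorem~\ref{LSC} to $(e^{-ix}u_n)$ and use $\mathcal{E}_1(u)=\mathcal{E}_0(e^{-ix}u)$ once more). Strong $L^2$ convergence, together with the fact that $\partial_x^{-\frac12}$ maps positive-spectrum $L^2$ functions boundedly into $L^2$, gives $\mathcal{P}(u)=\lim_n\mathcal{P}(u_n)=\mathcal{P}^*$ and $\mathcal{M}(u)=\lim_n\mathcal{M}(u_n)=\mathcal{M}^*$ --- this is exactly where compactness of the embedding rules out any loss of mass or momentum to infinite frequencies. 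Hence $u$ is admissible, so $\mathcal{E}_\sigma(u)\ge\mathcal{E}^*_\sigma$; combined with the lower-semicontinuity inequality $\mathcal{E}_\sigma(u)\le\mathcal{E}^*_\sigma$ this forces $\mathcal{E}_\sigma(u)=\mathcal{E}^*_\sigma$, i.e. $u$ is a minimizer. The variants in which only the mass or only the momentum is prescribed are handled identically, the coercivity step then using whichever single constraint is retained.

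I do not anticipate a serious obstacle. The one place that needs genuine care is the coercivity when $\sigma=1$: there $\mathcal{E}_1$ alone does not see the first Fourier mode, so one must actively use the mass or momentum constraint to bound $a_1$ and close the $W^{\frac14,4}$ estimate.
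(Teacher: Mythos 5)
Your proposal is correct and follows essentially the same route as the paper: the direct method, boundedness of the minimizing sequence in $W^{\frac 14,4}$, compactness in $L^p$ via Rellich--Kondrakov to pass to the limit in the constraints $\mathcal{M}$ and $\mathcal{P}$, and Theorem~\ref{LSC} for weak convergence and lower semicontinuity of the energy. The one point where you go beyond the paper's write-up is the coercivity step for $\sigma=1$, where you correctly observe that $\mathcal{E}_1$ does not control the first Fourier mode and that the constraint on $\mathcal{M}$ or $\mathcal{P}$ must be invoked to bound $a_1$ before concluding boundedness in $W^{\frac 14,4}$ --- a detail the paper's proof passes over silently.
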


\begin{proof} 
We apply the direct method of the calculus of variations and consider a minimizing sequence $(u_n)$ such that
\begin{itemize}
\item $u_n \in W^{\frac 14,4}$ for any $n$,
\item $\mathbb{P}_+ u_n = u_n$ for any $n$,
\item $\mathcal{M}(u_n) = \mathcal{M}^*$ for any $n$,
\item $\mathcal{P}(u_n) = \mathcal{P}^*$ for any $n$, and
\item $\mathcal{E}_\sigma(u_n) \rightarrow \mathcal{E}_\sigma^*$ as $n \to \infty$.
\end{itemize}

Since $(u_n)$ is bounded in $W^{\frac 14,4}$, it is precompact in $L^p$, $p<\infty$ by the Rellich-Kondrakov theorem and there exists a subsequence (still denoted $(u_n)$) such that $u_n \rightarrow u$ in $L^{10}$ and in particular $\mathcal{M}(u) = \mathcal{M}^*$ and $\mathcal{P}(u) = \mathcal{P}^*$. We now apply Theorem \ref{LSC} and obtain yet another subsequence, still denoted $(u_n)$, and a limit $\widetilde{u} \in W^{\frac 14,4}$; however, by uniqueness of the (weak) limit, it follows that $\widetilde{u} = u$.

Overall, we have found $u \in W^{\frac 14,4}$ such that $\mathcal{M}(u) = \mathcal{M}^*$, $\mathcal{P}(u) = \mathcal{P}^*$, and $\mathcal{E}_\sigma(u) \leq \mathcal{E}_\sigma^*$. By definition of $\mathcal{E}_\sigma^*$, actually $\mathcal{E}_\sigma(u) \leq \mathcal{E}_\sigma^*$ holds, which is the desired statement.
\end{proof}

The Euler-Lagrange equation satisfied by the minimizer obtained in the previous proposition is
$$
\partial \mathcal{E}_\sigma(u) = \lambda \partial \mathcal{M}(u) + \mu \partial \mathcal{P}(u),
$$
where the Lagrange multipliers $\lambda,\mu \in \mathbb{C}$.
Using the formula in \eqref{gradients} for the gradients of $\mathcal{E}_\sigma$, $\mathcal{M}$ and $\mathcal{P}$, the Euler-Lagrange equation becomes
$$
\frac 4 \pi \mathcal{C}_\sigma(u) = \lambda \Lambda^{-1} u + \mu u.
$$
This equation is satisfied \textit{a priori} in a weak sense, namely after pairing with a smooth function. It would be interesting to develop a regularity theory; i.e.\ is a solution of this equation or a minimizer necessarily smooth? The equation would then be satisfied in a stronger sense.

\section{Traveling waves}

\label{sectiontraveling}

\subsection{General traveling waves} These have the form
$$
\Phi(x-ct) e^{i \omega t},
$$
where $\Phi$ has positive spectrum. Plugging this ansatz in \eqref{eqC} gives the equation
$$
- c \partial_x \Phi + \omega \Phi = \partial_x \mathcal{C}_\sigma[\Phi]
$$
or after integrating
\begin{equation}
\label{eqPhi}
- c \Phi + \omega \Lambda^{-1} \Phi = \mathcal{C}_\sigma[\Phi]
\end{equation}
(in which sense this equation is satisfied depends on the smoothness of $\Phi$; if $\Phi$ is only an element of $W^{\frac 14,4}$, then this equation is interpreted in a weak sense).

Taking the inner product with $\overline{\Phi}$ and using \eqref{gradientsandfunctions}, we find the identity
$$
- c \mathcal{P}(\Phi) + \omega \mathcal{M}(\Phi) = \frac{\pi}2 \mathcal{E}_\sigma(\Phi).
$$

\begin{lemma}
By definition, stationary solutions are constant in time (i.e.\ satisfy the above with $c=\omega=0$).
\begin{itemize}
\item[(i)] If $\sigma=0$, the only stationary solution is zero.
\item[(ii)] If $\sigma =1$, the only stationary solutions are multiples of $e^{ix}$.
\end{itemize}
\end{lemma}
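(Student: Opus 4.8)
The plan is to reduce the lemma to the sign information on the energy supplied by Lemma \ref{lemmanonnegative}. First I would spell out what ``stationary'' means: since $\mathcal{C}_\sigma[\Phi]$ has \emph{positive} spectrum (it is of the form $\mathbb{P}_+[\cdots]$, cf.\ \eqref{newformulaC2}), the condition $\partial_x \mathcal{C}_\sigma[\Phi]=0$ forces $\mathcal{C}_\sigma[\Phi]$ to be a constant with positive spectrum, hence $\mathcal{C}_\sigma[\Phi]=0$; equivalently, $\Phi$ solves \eqref{eqPhi} with $c=\omega=0$. Here $\mathcal{C}_\sigma[\Phi]$ is understood as an element of $H^{-1/2}$ (or $W^{-1/4,4/3}$) via the cubic bound $\|\mathcal{C}_\sigma(u)\|_{H^{-1}}\lesssim\|u\|_{W^{1/4,4}}^3$ established in Section \ref{sectionweak}, so the equation makes sense for $\Phi\in W^{1/4,4}$ in a weak sense.

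Next I would pair $\mathcal{C}_\sigma[\Phi]=0$ with $\overline{\Phi}$ and invoke the identity recorded just above the statement, namely $-c\,\mathcal{P}(\Phi)+\omega\,\mathcal{M}(\Phi)=\tfrac{\pi}{2}\mathcal{E}_\sigma(\Phi)$; with $c=\omega=0$ this collapses to $\mathcal{E}_\sigma(\Phi)=0$. The conclusion is then immediate from Lemma \ref{lemmanonnegative}: for $\sigma=0$, $\Phi$ has non-negative spectrum and $\mathcal{E}_0(\Phi)=0$ forces $\Phi$ to be a constant by part (i), and the positive-spectrum constraint forces that constant to vanish, so $\Phi=0$; for $\sigma=1$, part (ii) gives directly that $\mathcal{E}_1(\Phi)=0$ with positive spectrum implies $\Phi=Ce^{ix}$. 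To make the statement sharp I would also note the (trivial) converse: $\Phi\equiv 0$ is stationary, and for $\sigma=1$ the Fourier formula \eqref{newformulaC} shows $\mathcal{C}_1[Ce^{ix}]=0$, since the only contributing term has $k=\ell=m=p=1$ with coefficient $1-|1-1|-1=0$ --- consistent with the conservation of $a_1$ noted in Section \ref{sectionstructure}.

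The only point requiring a little care --- and the closest thing to an obstacle --- is justifying the pairing identity $\langle \mathcal{C}_\sigma[\Phi],\Phi\rangle=\tfrac{\pi}{2}\mathcal{E}_\sigma(\Phi)$ when $\Phi$ is merely of $W^{1/4,4}$ regularity, so that the left-hand side is a duality pairing rather than a genuine integral. This is handled by verifying the identity for smooth $\Phi$ and extending by continuity, using once more the cubic bound together with the equivalence $\mathcal{E}_0(u)\sim\|u\|_{W^{1/4,4}}^4$ (and $\mathcal{E}_1(e^{-ix}u)=\mathcal{E}_0(u)$ from Lemma \ref{lemmanonnegative}); everything else is formal bookkeeping.
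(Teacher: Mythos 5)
Your proposal is correct and follows exactly the paper's own route: set $c=\omega=0$ in the identity $-c\,\mathcal{P}(\Phi)+\omega\,\mathcal{M}(\Phi)=\tfrac{\pi}{2}\mathcal{E}_\sigma(\Phi)$ to conclude $\mathcal{E}_\sigma(\Phi)=0$, then invoke Lemma \ref{lemmanonnegative}. The additional details you supply (why $\partial_x\mathcal{C}_\sigma[\Phi]=0$ forces $\mathcal{C}_\sigma[\Phi]=0$ via the positive-spectrum constraint, the low-regularity justification of the pairing, and the converse check) are sensible elaborations of the same argument rather than a different approach.
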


\begin{proof} This follows from the above identity and Lemma \ref{lemmanonnegative}. \end{proof}

\subsection{Explicit formulas}

We know that exact traveling waves are given by 
$$
\Psi_k(t,x) = e^{i[ kx + k(k - \sigma) t]}, \qquad k \in \mathbb{N}.
$$
Indeed, if $k \geq 0$, 
$$
\mathcal{C}_\sigma[e^{ikx}] = (k-\sigma) e^{ikx}.
$$
These traveling waves are such that
$$
\mathcal{P}(\Psi_k) = 2\pi, \qquad \mathcal{M}(\Psi_k) = k^{-1}, \qquad \mathcal{E}_\sigma(\Psi_k) = 4(k-\sigma).
$$

\begin{proposition}
\begin{itemize}
\item[(i)] If $\sigma=0$, the traveling wave $\Psi_1$ is orbitally stable.
\item[(ii)] If $\sigma =1$, the traveling waves $\Psi_1$ and $\Psi_2$ are orbitally stable.
\end{itemize}
\end{proposition}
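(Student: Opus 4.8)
I would realise each of the three waves as a constrained energy minimiser and run the standard compactness argument of the direct method, with Theorem \ref{LSC} supplying lower semicontinuity and the coercivity $\mathcal{E}_0 \sim \|\cdot\|_{W^{\frac14,4}}^4$ doing the rest; stability is meant in the energy space $W^{\frac14,4}$, and the relevant orbit is the circle $\{C e^{ikx}:|C|=1\}$ (which is invariant and contains the whole trajectory of $\Psi_k$). The key point is an elementary rigidity observation. Writing $u=\sum_{k\ge1}a_k e^{ikx}$ one has $\mathcal{P}(u)=2\pi\sum_k|a_k|^2$ and $\mathcal{M}(u)=2\pi\sum_k|a_k|^2/k$; since $1/k\le 1$ with equality only at $k=1$, the constraints $\mathcal{P}=2\pi$, $\mathcal{M}=1$ force $u=Ce^{ix}$ with $|C|=1$, so for $\sigma=0$ the level set $\{\mathcal{P}=2\pi,\ \mathcal{M}=1\}$ \emph{is} the orbit of $\Psi_1$. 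When $\sigma=1$ the mode $a_1$ is also conserved; imposing additionally $a_1=0$, monotonicity of $1/k$ for $k\ge2$ shows that $\{\mathcal{P}=2\pi,\ \mathcal{M}=\tfrac12,\ a_1=0\}$ is the orbit of $\Psi_2$. For $\Psi_1$ with $\sigma=1$ no compactness is needed: by Lemma \ref{lemmanonnegative}(ii) and $\mathcal{E}_1(u)=\mathcal{E}_0(e^{-ix}u)$, coercivity gives $\mathcal{E}_1(u)\sim\|u-a_1e^{ix}\|_{W^{\frac14,4}}^4$, so $\mathcal{E}_1$ directly measures the $W^{\frac14,4}$-distance of $u$ to the line $\mathbb{C}e^{ix}$; since $\mathcal{E}_1(u(t))\le\mathcal{E}_1(u_0)$ and $\mathcal{P}$ (which pins $|a_1|$) is conserved, a solution starting near $\Psi_1(0)=e^{ix}$ stays near $\{e^{i\theta}e^{ix}\}$ for all time, with the stability modulus coming straight from continuity of $\mathcal{E}_1$ on $W^{\frac14,4}$.

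\textbf{The compactness argument for $\Psi_1$ ($\sigma=0$) and $\Psi_2$ ($\sigma=1$).} I would argue by contradiction: if $\Psi_k$ is not orbitally stable there exist $\varepsilon_0>0$, data $u_{0,n}\to\Psi_k(0)$ in $W^{\frac14,4}$, solutions $u_n$ and times $t_n$ with $\dist_{W^{\frac14,4}}(u_n(t_n),\mathrm{Orbit}(\Psi_k))\ge\varepsilon_0$. Put $w_n=u_n(t_n)$. Along the flow $\mathcal{P}$, $\mathcal{M}$ (and $a_1$ when $\sigma=1$) are conserved and $\mathcal{E}_\sigma(w_n)\le\mathcal{E}_\sigma(u_{0,n})$, so $\mathcal{P}(w_n),\mathcal{M}(w_n),a_1(w_n)$ converge to the values of $\Psi_k$ and $\limsup_n\mathcal{E}_\sigma(w_n)\le\mathcal{E}_\sigma(\Psi_k)$. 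The energy bound makes $(w_n)$ bounded in $W^{\frac14,4}$; by the Rellich--Kondrakov theorem a subsequence converges in $L^2$ to some $w_\infty$, and passing to the limit in the constraints (all continuous under $L^2$-convergence for functions with positive spectrum) together with the rigidity observation forces $w_\infty\in\mathrm{Orbit}(\Psi_k)$. Weak lower semicontinuity of $\mathcal{E}_\sigma$ (the mechanism of Theorem \ref{LSC}) then gives $\mathcal{E}_\sigma(\Psi_k)=\mathcal{E}_\sigma(w_\infty)\le\liminf_n\mathcal{E}_\sigma(w_n)\le\mathcal{E}_\sigma(\Psi_k)$, hence $\mathcal{E}_\sigma(w_n)\to\mathcal{E}_\sigma(w_\infty)$ and $w_n\rightharpoonup w_\infty$ in $W^{\frac14,4}$.

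\textbf{Upgrading to strong convergence.} This is where coercivity is essential. After conjugating by $e^{-ix}$ when $\sigma=1$ (turning $\mathcal{E}_1$ into an $\mathcal{E}_0$, and $w_n$ into $e^{-ix}w_n$ up to its bounded constant term $a_1(w_n)$), write $\mathcal{E}_0(v)=\frac1{4\pi^2}\|F[v]\|_{L^4(\mathbb{T}^2)}^4$ with $F[v](x,y)=(v(x)-v(y))/(1-\cos(x-y))^{1/4}$. Since $F$ is bounded and linear on $W^{\frac14,4}$, hence weakly continuous, $w_n\rightharpoonup w_\infty$ yields $F[w_n]\rightharpoonup F[w_\infty]$ in $L^4$, while $\|F[w_n]\|_{L^4}\to\|F[w_\infty]\|_{L^4}$ by the preceding step. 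As $L^4$ is uniformly convex, the Radon--Riesz property forces $F[w_n]\to F[w_\infty]$ strongly in $L^4$, i.e. $\mathcal{E}_0(w_n-w_\infty)\to0$, whence $\|w_n-w_\infty\|_{W^{\frac14,4}}\to0$ by coercivity; undoing the conjugation (and using $a_1(w_n)\to0$) gives $w_n\to w_\infty\in\mathrm{Orbit}(\Psi_k)$, contradicting $\dist(w_n,\mathrm{Orbit}(\Psi_k))\ge\varepsilon_0$.

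\textbf{Main obstacles.} The first is making precise the class of solutions for which the proof runs: the conservation of $\mathcal{P}$, $\mathcal{M}$ and $a_1$ and the energy \emph{inequality} $\mathcal{E}_\sigma(u(t))\le\mathcal{E}_\sigma(u_0)$ hold trivially for strong solutions and must be verified for the global weak solutions of Section \ref{sectionweak}, using that these are $L^\infty_t$-weak limits of the finite-dimensional Hamiltonian flows $\partial_t u=\partial_x Q^N\mathcal{C}_\sigma(Q^N u)$, each of which conserves all these quantities exactly; note we only need the inequality for the energy, which such limits do deliver. The second, more structural, point is that the rigidity of the constraint level set is special to $k=1$ when $\sigma=0$ and to $k=1,2$ when $\sigma=1$: without the conserved mode $a_1$ (absent for $\sigma=0$), or for larger $k$, the set $\{\mathcal{P}=2\pi,\ \mathcal{M}=k^{-1}\}$ strictly contains the orbit and $\Psi_k$ fails to be its energy minimiser, so the argument — and, in view of the finite-time blow up in \cite{DCG}, presumably the conclusion — breaks down there.
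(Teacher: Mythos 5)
Your argument is correct in substance, but it is a genuinely different (and much heavier) route than the paper's. The paper's proof is the two-line Casimir/convexity observation that you state as your ``rigidity observation'' and then mostly bypass: since $\mathcal{P}(u)=2\pi\sum_k|a_k|^2$ and $\mathcal{M}(u)=\sum_k|a_k|^2/k$ (note your extra factor $2\pi$ in $\mathcal{M}$ is a harmless normalization slip), the conserved quantity $\tfrac{1}{2\pi}\mathcal{P}(u)-\mathcal{M}(u)=\sum_{k\ge2}|a_k|^2\left(1-\tfrac1k\right)\ge\tfrac12\sum_{k\ge2}|a_k|^2$ is nonnegative, vanishes exactly on $\mathbb{C}e^{ix}$, and is small for data near $\Psi_1$; together with conservation of $\mathcal{P}$ (which pins $|a_1|$) this already yields $L^2$ orbital stability of $\Psi_1$ with no energy, no compactness and no lower semicontinuity. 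The analogous conserved forms $\sum_{k\ge2}|a_k|^2(\tfrac12-\tfrac1k)$ and $a_1$ handle $\Psi_2$ and $\Psi_1$ for $\sigma=1$. What your concentration--compactness argument buys is stability in the stronger topology $W^{\frac14,4}$: the quadratic invariants cannot control the Gagliardo seminorm near $\Psi_1$ because $\mathcal{E}_0(\Psi_1)=4\neq0$, so your use of the energy inequality, Theorem \ref{LSC}, and the Radon--Riesz upgrade via uniform convexity of $L^4$ is genuinely doing extra work, and the chain (weak convergence of $F[w_n]$, convergence of norms, hence strong $L^4$ convergence, hence $\mathcal{E}_0(w_n-w_\infty)\to0$) is sound. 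Two caveats: first, the contradiction set-up only needs the constraints to hold \emph{in the limit}, which your $L^2$-continuity of $\mathcal{P},\mathcal{M},a_1$ delivers, so the degeneracy that the constraint level set equals the orbit is a feature, not a bug; second, both your proof and the paper's silently assume that the solutions under consideration actually conserve $\mathcal{P}$, $\mathcal{M}$ and $a_1$ and satisfy the energy inequality --- immediate for the local strong solutions of Section \ref{sectionstrong}, but requiring the limiting argument you sketch for the global weak solutions of Section \ref{sectionweak}. You correctly flag this; the paper does not address it either, so it is not a gap relative to the paper's own standard of rigor.
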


\begin{proof}
This follows from the conservation of $\mathcal{M}$ and $\mathcal{P}$ and from the conservation of $a_1$ if $\sigma=1$.
\end{proof}

If $\sigma=1$, adding the first Fourier mode to these traveling waves gives an Ansatz which is conserved. The corresponding explicit solutions are
$$
A e^{ix} + B e^{i\left[kx + k(k-1)(2 |A|^2 + |B|^2)t \right]}, \qquad A,B \in \mathbb{C}, \; k \in \mathbb{N}.
$$ 
These solutions are reminiscent of Rossby-Haurwitz waves, which play an important role in planetary fluid flows \cite{Haurwitz1940,Vallis2006}.

\subsection{Variational construction}
Proposition \eqref{constrainedmin} provides solutions of the constrained minimization problem
$$
\min \mathcal{E}_\sigma(\Phi) \quad \mbox{subject to} \quad \mathcal{M}(\Phi) = \mathcal{M}^* \;\; \mbox{and} \;\; \mathcal{P}(\Phi) = \mathcal{P}^*.
$$
The corresponding Euler-Lagrange equation
$$
\frac{4}{\pi} \mathcal{C}_\sigma[\Phi] = \lambda \Lambda^{-1} \Phi  + \mu \Phi.
$$
(where $\lambda$ and $\mu$ are Lagrange multipliers) coincides with \eqref{eqPhi}. This construction gives traveling wave solutions for any fixed $\mathcal{M}^*$ and $\mathcal{P}^*$

\appendix

\section{Variational and symplectic structures}

\label{appendixA}

\label{symplecticappendix}

We formally present the variational and symplectic structures of the filamentation equation. This will give a link between the symmetries and the conserved quantities that were presented in the introduction.

\subsection{General symplectic formalism}

We follow the presentation in Tao \cite{Tao} and start with a symplectic (antisymmetric, non-degenerate) bilinear form $\omega$ on a vector space $E$.

Given a real function $F$ on $E$, its differential is such that
$$
F(u+\epsilon \varphi) = F(u) + \epsilon \, dF(u) \cdot \varphi + O(\epsilon^2).
$$

By definition, the symplectic gradient $\nabla_\omega F$ is such that
$$
dF(u) \cdot \varphi = \omega( \nabla_\omega F , \varphi).
$$

The symplectic flow associated to $F$ is the evolution equation on $X$ given by
$$
\dot u = \nabla_\omega F(u).
$$
Given data $u_0$, we denote the solution $u$ to this evolution equation by
$$
u(t) = \Phi^\omega_F(u_0,t).
$$

We consider the flow given by a specific Hamiltonian $\mathcal{H}$
$$
\dot u = \nabla_\omega \mathcal{H} (u).
$$

Then the following are equivalent (Noether's theorem):
\begin{itemize}
\item the Poisson bracket vanishes: $\{ \mathcal{H}, F \} = \omega(\nabla_\omega \mathcal{H}, \nabla_\omega F) =0 $;
\item $F$ is a conserved quantity for the flow of $\mathcal{H}$: $F( \Phi^\omega_\mathcal{H}(u_0,t))$ is independent of $t$; and
\item $\Phi^\omega_F(\cdot,t)$ is a symmetry of the Hamiltonian $\mathcal{H}$; therefore, $\mathcal{H}( \Phi^\omega_{F}(u_0,t))$ is independent of $t$.
\end{itemize}

\subsection{Gradients}

Given a real functional $F$ om complex functions $u$ defined on $\mathbb{T}$ with positive spectrum, we denote $\partial F(u)$ as the gradient which is such that
$$
F(u+\epsilon \varphi) = F(u) + 2 \epsilon \operatorname{Re} \int \partial F(u) \varphi \dd x + O(\epsilon^2)
$$
(the usual notation for this function is $\partial F(u) = \frac{\partial F(u)}{\partial u}$, where the derivative is understood in the complex sense). The gradient $\partial F(u)$ becomes unique once we require that it has a negative spectrum.

The gradients of the Hamiltonian, momentum and mass are given by
\begin{equation}
\label{gradients}
\begin{split}
& \partial \mathcal{E}_\sigma(u) = \frac{4}{\pi} \overline{\mathcal{C}(u)} \\
& \partial \mathcal{P}(u) = \overline{u} \\
& \partial \mathcal{M}(u) = \Lambda^{-1} \overline{u}.
\end{split}
\end{equation}

Furthermore, the functions and their gradients are related through the inner products
\begin{equation}
\label{gradientsandfunctions}
\begin{split}
& \int \overline{\mathcal{C}(u)} \, u \dd x = \frac{\pi}2 {\mathcal{E}_\sigma(u)} \\
& \int \overline{\partial \mathcal{P}(u)} \, u \dd x = \mathcal{P}(u) \\
& \int \overline{\partial \mathcal{M}(u)} \, u \dd x= \mathcal{M}(u).
\end{split}
\end{equation}

\subsection{Application to the filamentation equation}

Consider the symplectic form on functions with positive spectrum
$$
\omega(u,v) = \operatorname{Re} \int_0^{2\pi}\partial_x^{-1} \overline{u} \,  v \dd x
$$

If the function $F$ admits a gradient as in the previous subsection, then
$$
\nabla_\omega F = 2 \partial_x\overline{ \partial F}
$$
(note that this formula makes sense even without the positive spectrum requirement, while the symplectic form does require a positive spectrum).

\begin{itemize}
\item For the energy $\mathcal{E}_\sigma$, the gradient is $\frac{4}{\pi} \overline{\mathcal{C}(u)}$ and the symplectic gradient is $\frac{8}{\pi} \partial_x \mathcal{C}[u]$. The associated symplectic flow is the equation \eqref{eqC}, up to time rescaling.
\item Consider the momentum $\mathcal{P}(u) = \int |u(x)|^2 \dd x$. Its gradient is simply $\partial \mathcal{P} = \overline u$, and its symplectic gradient is $\nabla_\omega \mathcal{P} = 2 \partial_x u$. This is the generator of the translations $u(x) \mapsto u(x+x_0)$.
\item Consider the mass $\mathcal{M}(u) = \int |\partial_x^{-\frac{1}{2}} u(x)|^2 \dd x$. Its gradient is $\partial \mathcal{M} = \Lambda^{-1} \overline u$, and its symplectic gradient is $\nabla_\omega \mathcal{M} = 2iu$. This is the generator of the phase rotations $u(x) \mapsto e^{i \theta} u(x)$.
\end{itemize}

\section{Computations}

\label{appendixB}

\begin{lemma} If $m \in \mathbb{Z}$, then
\begin{align*}
\int_0^{2\pi} \frac{1 - \cos(mz)}{1 - \cos z} \dd z = 2 \pi |m| \qquad \mbox{and} \qquad \lim_{\epsilon \to 0} \int_{\epsilon}^{2\pi - \epsilon}  \frac{1 - e^{i mz}}{1 - \cos z} \dd z = 2 \pi |m| .
\end{align*}
\end{lemma}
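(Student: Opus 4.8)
The plan is to reduce at once to the case $m \geq 1$, since for $m = 0$ both integrands are identically $0$ and the asserted value $2\pi|0| = 0$ is immediate; the case $m < 0$ will be recovered by symmetry at the end. For the first identity I would exploit the elementary geometric-sum identity: for $z \notin 2\pi\mathbb{Z}$,
$$
\sum_{j=0}^{m-1} e^{ijz} = \frac{e^{imz}-1}{e^{iz}-1},
$$
so that, since $|e^{i\theta}-1|^2 = 2 - 2\cos\theta$,
$$
\left| \sum_{j=0}^{m-1} e^{ijz} \right|^2 = \frac{|e^{imz}-1|^2}{|e^{iz}-1|^2} = \frac{1-\cos(mz)}{1-\cos z}.
$$
Expanding the left-hand side as the finite sum $\sum_{j,k=0}^{m-1} e^{i(j-k)z}$ and integrating over $[0,2\pi]$ term by term (using that $\int_0^{2\pi} e^{inz}\,\dd z$ equals $2\pi$ if $n=0$ and $0$ otherwise) kills all off-diagonal terms and leaves exactly the $m$ diagonal ones, each contributing $2\pi$; hence $\int_0^{2\pi} \frac{1-\cos(mz)}{1-\cos z}\,\dd z = 2\pi m$. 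No principal value is needed here, since the quotient extends continuously to all of $[0,2\pi]$, with value $m^2$ at $z \in 2\pi\mathbb{Z}$; equivalently, $\frac{1-\cos(mz)}{1-\cos z}$ is $m$ times the Fejér kernel.

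For the second identity I would split the integrand into its real and imaginary parts,
$$
\frac{1-e^{imz}}{1-\cos z} = \frac{1-\cos(mz)}{1-\cos z} - i\,\frac{\sin(mz)}{1-\cos z}.
$$
The real part is the continuous function just treated, so $\int_\epsilon^{2\pi-\epsilon}$ of it converges to $2\pi m$ as $\epsilon \to 0$ with no subtlety. For the imaginary part the decisive point is the reflection $z \mapsto 2\pi - z$: since $\sin(m(2\pi-z)) = -\sin(mz)$ while $1 - \cos(2\pi-z) = 1 - \cos z$, the function $\frac{\sin(mz)}{1-\cos z}$ is odd under this reflection, which maps the symmetric interval $[\epsilon, 2\pi-\epsilon]$ onto itself; therefore $\int_\epsilon^{2\pi-\epsilon} \frac{\sin(mz)}{1-\cos z}\,\dd z = 0$ for every $\epsilon \in (0,\pi)$, so the imaginary part contributes nothing to the limit. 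Adding the two pieces gives $2\pi m$. Negative $m$ then needs no extra argument: the first integrand is even in $m$, and in the second integral the real part depends only on $|m|$ while the imaginary part still vanishes by the same reflection, so both formulas hold with $|m|$ on the right.

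I do not anticipate a genuine obstacle. The only point that deserves care is the principal-value issue in the second statement: near $z = 0$ the integrand behaves like $-2im/z$, hence is not in $L^1$, and one must use the symmetric cut-offs $[\epsilon, 2\pi-\epsilon]$ honestly — it is exactly this symmetric choice that produces the cancellation of the non-integrable (imaginary) part — rather than invoking a vague "principal value''.
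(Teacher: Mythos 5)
Your argument is correct, and it takes a genuinely different route from the paper. The paper converts $\int_0^{2\pi} \frac{1-\cos(mz)}{1-\cos z}\,\dd z$ into a contour integral over the unit circle, expands the integrand in a power series at the origin to read off the residue $m$, and invokes the residue theorem; the second identity is then dismissed as "a simple consequence of the first." You instead use the Dirichlet/Fej\'er identity $\frac{1-\cos(mz)}{1-\cos z} = \bigl| \sum_{j=0}^{m-1} e^{ijz} \bigr|^2$ and integrate the double sum term by term, which is entirely elementary (no complex analysis), makes the non-negativity and the removability of the singularity at $z\in 2\pi\mathbb{Z}$ transparent, and identifies the kernel explicitly. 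More importantly, for the second identity you supply the detail the paper omits: the split into real and imaginary parts, the observation that the imaginary part $-i\,\frac{\sin(mz)}{1-\cos z}$ is odd under $z\mapsto 2\pi-z$ and hence integrates to zero over the symmetric interval $[\epsilon,2\pi-\epsilon]$, and the honest acknowledgment that this part is not in $L^1$ near the endpoints so the symmetric cut-off is doing real work. This is exactly the point on which the paper's one-line justification is thin, and your treatment of it is the more careful one. The reduction to $m\ge 1$ and recovery of $|m|$ by evenness is also handled correctly.
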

\begin{proof} The second identity is a simple consequence of the first; furthermore, it suffices to deal with the case $m >0$, which we will focus on. The integral can be written
$$
\int_0^{2\pi} \frac{1 - \cos(mz)}{1 - \cos z} \dd z = \int_S \frac{2 - z^m - z^{-m}}{2 - z - z^{-1}} \frac{\dd z}{iz} ,
$$
where the contour $S$ is the unit circle. The only pole of the integrated function is at $0$; the expansion of this function at the origin can be found by writing
$$
\frac{2 - z^m - z^{-m}}{2z - z^2 - 1}  = (z^m + z^{-m} -2 )(1 + z + z^2 + z^3 + \dots)^2 = (z^m + z^{-m} -2 )(1 + 2z + 3z^2 + 4z^3 + \dots),
$$
from which we see that the residue at $0$ is $m$. The result therefore follows by the residue theorem.
\end{proof}

\begin{lemma} If $u = \mathbb{P} u$, and $u$ is sufficiently smooth, then
\label{computation2}
\begin{align*}
\mathbb{P} \left[ \frac{1}{4\pi} \int_0^{2\pi} \frac{|u(x) - u(y)|^2 (u(x) - u(y))}{1-\cos (x-y)} \dd y \right] & = \frac{1}{2\pi} \mathbb{P} \left[ |u|^2 \Lambda u - u \Lambda |u|^2 \right] \\
& = \sum_{\substack{p = k+\ell - m \\ k,\ell,m,p \geq 0}} e^{ipx} \left( k - |k-p| \right) a_k a_\ell \overline{a_m} .
\end{align*}
\end{lemma}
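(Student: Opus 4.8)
The plan is to reduce the integral to the elementary identity $\lim_{\epsilon\to 0}\int_\epsilon^{2\pi-\epsilon}\frac{1-e^{imz}}{1-\cos z}\dd z = 2\pi|m|$ just proved, by Fourier-expanding $u$. The engine is the following formula, valid for any sufficiently smooth $h$ on $\mathbb{T}$: expanding $h = \sum_n \widehat h(n)e^{inx}$ and substituting $z = x-y$,
\[
\lim_{\epsilon\to0}\int_{\epsilon<|x-y|<\pi}\frac{h(x)-h(y)}{1-\cos(x-y)}\dd y = \sum_n \widehat h(n)e^{inx}\lim_{\epsilon\to0}\int_{\epsilon<|z|<\pi}\frac{1-e^{-inz}}{1-\cos z}\dd z = 2\pi\sum_n |n|\,\widehat h(n)e^{inx} = (\Lambda h)(x),
\]
the middle equality being the previous lemma. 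The interchange of sum and limit is legitimate because, for the symmetric cutoff, $\int_{\epsilon<|z|<\pi}\frac{\sin nz}{1-\cos z}\dd z = 0$ by parity, while $0\le \int_{\epsilon<|z|<\pi}\frac{1-\cos nz}{1-\cos z}\dd z\le 2\pi|n|$ by positivity of the integrand and the previous lemma, and $\widehat h(n)$ decays rapidly.

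Next I would fix $x$ and set $g_x(y) = |u(x)-u(y)|^2\big(u(x)-u(y)\big)$. Since $u$ is smooth, $g_x(x)=0$ and $|g_x(y)| = |u(x)-u(y)|^3 = O(|x-y|^3)$ as $y\to x$, so $\tfrac{g_x(y)}{1-\cos(x-y)}$ is absolutely integrable; writing $g_x(y) = g_x(y)-g_x(x)$, the formula above applies with $h=g_x$:
\[
\frac{1}{4\pi}\int_0^{2\pi}\frac{g_x(y)}{1-\cos(x-y)}\dd y = -\frac{1}{4\pi}\big(\Lambda_y g_x\big)(x),
\]
with $\Lambda_y$ acting in the variable $y$. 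Then I would expand $g_x(y) = (u(x)-u(y))^2\overline{(u(x)-u(y))}$ into its six monomials in $u(y),\overline{u(y)}$ with $y$-independent coefficients, apply $\Lambda_y$ --- which kills the constant monomial $|u(x)|^2u(x)$ and acts on the rest through $\Lambda_y\overline{u(y)} = \overline{(\Lambda u)(y)}$, $\Lambda_y(u(y)^2) = (\Lambda u^2)(y)$, etc. --- and set $y=x$. Abbreviating $u = u(x)$, this gives
\[
\frac{1}{4\pi}\int_0^{2\pi}\frac{g_x(y)}{1-\cos(x-y)}\dd y = \frac{1}{4\pi}\left[\,2|u|^2\Lambda u - 2u\,\Lambda|u|^2 + u^2\,\overline{\Lambda u} - \overline u\,\Lambda(u^2) + \Lambda(|u|^2u)\,\right].
\]

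It then remains to apply $\mathbb{P}$ and identify the result. A short Fourier computation with $p = k+\ell-m$ shows the $p$-th Fourier coefficient of the correction $u^2\overline{\Lambda u} - \overline u\,\Lambda(u^2) + \Lambda(|u|^2u)$ is $2\pi(|p|-p)$ times $\sum_{k+\ell-m=p} a_ka_\ell\overline{a_m}$, hence vanishes for $p\ge 0$; so the correction is supported on strictly negative frequencies and $\mathbb{P}$ annihilates it, leaving $\mathbb{P}\big[\tfrac{1}{4\pi}\int\cdots\big] = \tfrac{1}{2\pi}\mathbb{P}\big[|u|^2\Lambda u - u\Lambda|u|^2\big]$. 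The same computation gives $\tfrac{1}{2\pi}\big[|u|^2\Lambda u - u\Lambda|u|^2\big] = \sum_{p=k+\ell-m}(\ell - |k-m|)a_ka_\ell\overline{a_m}e^{ipx}$; relabeling $k\leftrightarrow\ell$ (allowed as $a_ka_\ell$ is symmetric) and using $|\ell-m| = |k-p|$ rewrites this as $\sum (k-|k-p|)a_ka_\ell\overline{a_m}e^{ipx}$, the constraints $k,\ell,m\ge0$ and $p\ge0$ coming from $\mathbb{P}u=u$ and the outer $\mathbb{P}$ respectively.

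The main obstacle is the non-integrable singularity of $\tfrac{1}{1-\cos(x-y)}$: the six monomial pieces of $g_x$ each yield a divergent integral, so one cannot integrate $g_x$ term by term. The argument survives only because the single subtraction $g_x(y) = g_x(y)-g_x(x)$ recasts the absolutely convergent left-hand side as a principal-value integral, whose termwise Fourier evaluation is exactly what the previous lemma provides. Making rigorous the two interchanges --- sum with the integral at fixed $\epsilon$, then sum with the limit $\epsilon\to0$ --- is the one genuinely technical step, and it rests on the uniform bound $\big|\int_{\epsilon<|z|<\pi}\frac{1-e^{-inz}}{1-\cos z}\dd z\big|\le 2\pi|n|$ recorded in the first paragraph together with the rapid decay of $\widehat{g_x}$; the remainder is bookkeeping.
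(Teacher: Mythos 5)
Your proof is correct, and although it rests on the same elementary integral $\lim_{\epsilon \to 0}\int_\epsilon^{2\pi-\epsilon}\frac{1-e^{imz}}{1-\cos z}\,\dd z = 2\pi|m|$ as the paper's argument, it organizes the computation genuinely differently. The paper Fourier-expands the whole cubic integrand at once, reducing the lemma to the single integral $\int_0^{2\pi}\frac{(1-e^{ikz})(1-e^{i\ell z})(1-e^{-imz})}{1-\cos z}\,\dd z$, which it evaluates as $2\pi\left(k+\ell-|\ell-m|-|k-m|\right)$ by splitting the product into eight exponentials and pairing them so that the elementary integral applies; symmetrizing in $k,\ell$ then yields both claimed forms in one stroke. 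You instead first upgrade the elementary integral to the operator identity $\mathrm{p.v.}\int \frac{h(x)-h(y)}{1-\cos(x-y)}\,\dd y = \Lambda h(x)$, then exploit $g_x(x)=0$ to recast the absolutely convergent integral as $-\tfrac{1}{4\pi}(\Lambda_y g_x)(x)$, and finally expand the cubic into its six monomials in physical space. This costs a little extra care with the principal value and the sum/limit interchange (which you handle correctly via the parity and positivity bounds), but it buys the \emph{unprojected} identity
\[
\frac{1}{4\pi}\int_0^{2\pi}\frac{|u(x)-u(y)|^2(u(x)-u(y))}{1-\cos(x-y)}\,\dd y = \frac{1}{4\pi}\left[\,2|u|^2\Lambda u - 2u\,\Lambda|u|^2 + u^2\,\overline{\Lambda u} - \overline u\,\Lambda(u^2) + \Lambda(|u|^2u)\,\right],
\]
with the last three terms supported on strictly negative frequencies (coefficient $2\pi(|p|-p)$) --- a slightly stronger statement than the lemma that makes the role of the projector $\mathbb{P}$ transparent. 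All the individual steps (the sign in $-\Lambda_y g_x$, the vanishing of the correction for $p\ge 0$, the relabeling $k\leftrightarrow\ell$ with $|\ell-m|=|k-p|$) check out.
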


\begin{proof}
Expanding $u$ in a Fourier series, $u(x) = \sum_{n \geq 0} a_n e^{inx}$, the left-hand side above becomes
\begin{align*}
& \sum_{\substack{k,\ell,m \geq 0\\ k+ \ell - m \geq 0}} a_k a_\ell \overline{a_m} \int_0^{2\pi} \frac{(e^{ikx} - e^{i ky})(e^{i\ell x} - e^{i \ell y})(e^{-imx} - e^{-i my})}{1- \cos(x-y)} \dd y \\
& \qquad = \sum_{\substack{k,\ell,m \geq 0 \\ k+ \ell - m \geq 0}}  a_k a_\ell \overline{a_m}  e^{i (k+\ell -m)x} \int_0^{2\pi} \frac{(1 - e^{i kz})(1 - e^{i \ell z})(1 - e^{-i mz})}{1- \cos z} \dd z.
\end{align*}
It remains to compute the integral. This is achieved by applying the previous lemma as follows:
\begin{align*}
&  \int_0^{2\pi} \frac{(1 - e^{i kz})(1 - e^{i \ell z})(1 - e^{-i mz})}{1- \cos z} \dd z \\
& \qquad =  \int_0^{2\pi} \frac{1 - e^{ikz} - e^{i\ell z} - e^{-imz} + e^{i(k+\ell)z} + e^{i(k-m)z} + e^{i(\ell -m) z} - e^{i(k+\ell -m)z}}{1 - \cos z} \dd z \\
& \qquad = \lim_{\epsilon \to 0} \int_\epsilon^{2\pi-\epsilon} \left[ \frac{1 - e^{i(k+\ell -m)z}}{1 - \cos z} + \frac{1 - e^{ikz}}{1- \cos z} + \frac{1 - e^{i\ell z}}{1- \cos z}  + \frac{1 - e^{im z}}{1- \cos z} \right. \\
& \qquad\qquad \qquad \qquad \qquad \qquad \qquad  \left. + \frac{e^{i(k+\ell)z} - 1 }{1 - \cos z} + \frac{e^{i(\ell-m)z} - 1 }{1 - \cos z} + \frac{e^{i(k-m)z} - 1 }{1 - \cos z}  \right] \dd z \\
& \qquad= 2\pi \left( k+\ell-m + k + \ell + m - (k+\ell) - |\ell-m| - |k-m| \right)\\
& \qquad= 2\pi \left( k + \ell - |\ell-m| - |k-m| \right)
\end{align*}
under the assumption that $k,\ell,m,k+\ell-m \geq 0$. Denoting $p=k+\ell-m$, we have by symmetry between $k$ and $\ell$,
$$
2\pi \sum \left( k + \ell - |\ell-m| - |k-m| \right) a_k a_\ell \overline{a_m} e^{ipx} = 4\pi \sum \left( k - |k-p| \right) a_k a_\ell \overline{a_m} e^{ipx}.
$$
Coming back to physical space, this is
$$
\mathbb{P} \int_0^{2\pi} \frac{|u(x) - u(y)|^2 (u(x) - u(y))}{1-\cos (x-y)} \dd y = 2 \mathbb{P} \left[ |u|^2 \Lambda u - u \Lambda |u|^2 \right] .
$$
\end{proof}

\begin{lemma}
\label{computation3} If $u = \mathbb{P} u$ and $u$ is sufficiently smooth,
$$
\iint_0^{2\pi} \frac{|u(x) - u(y)|^4 }{1-\cos (x-y)} \dd x \dd y =  2\pi \int_0^{2\pi} \left[ - 4 |u|^2 \Lambda |u|^2 + 2 |u|^2 (\overline{u} \Lambda u + u \Lambda \overline{u} \right] \dd z.
$$
\end{lemma}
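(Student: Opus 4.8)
The plan is to reduce the identity to Lemma~\ref{computation2} by antisymmetrizing in the two integration variables and then invoking a short reality argument. A self‑contained Fourier computation, parallel to the proof of Lemma~\ref{computation2}, would be kept in reserve.

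First I would write $|u(x)-u(y)|^4 = |u(x)-u(y)|^2\bigl(u(x)-u(y)\bigr)\,\overline{\bigl(u(x)-u(y)\bigr)}$ and expand the last factor as $\overline{u(x)}-\overline{u(y)}$, splitting the double integral into two pieces. Exchanging $x\leftrightarrow y$ in the second piece and using that $1-\cos(x-y)$ and $|u(x)-u(y)|^2$ are symmetric while $u(x)-u(y)$ is antisymmetric, the two pieces coincide, so that
$$
\iint_0^{2\pi}\frac{|u(x)-u(y)|^4}{1-\cos(x-y)}\,\dd x\,\dd y = 2\int_0^{2\pi}\overline{u(x)}\,F(x)\,\dd x,\qquad F(x):=\int_0^{2\pi}\frac{|u(x)-u(y)|^2\bigl(u(x)-u(y)\bigr)}{1-\cos(x-y)}\,\dd y.
$$
Next, since $u$ has strictly positive spectrum, $\int\overline u\,F\,\dd x=\int\overline u\,\mathbb P F\,\dd x$, and Lemma~\ref{computation2} gives $\mathbb P F = 2\,\mathbb P\bigl(|u|^2\Lambda u - u\Lambda|u|^2\bigr)$; absorbing the projector once more when pairing with $\overline u$, and using $u\overline u=|u|^2$, this yields
$$
\iint_0^{2\pi}\frac{|u(x)-u(y)|^4}{1-\cos(x-y)}\,\dd x\,\dd y = 4\int_0^{2\pi}|u|^2\,\overline u\,\Lambda u\,\dd z - 4\int_0^{2\pi}|u|^2\,\Lambda|u|^2\,\dd z.
$$

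To conclude, I would note that the left‑hand side is real and that $\int|u|^2\Lambda|u|^2\,\dd z$ is real (since $\Lambda$ has real symbol and $|u|^2$ is real); hence $\int|u|^2\overline u\,\Lambda u\,\dd z$ is real, and therefore equals its own real part $\tfrac12\int|u|^2\bigl(\overline u\Lambda u + u\Lambda\overline u\bigr)\,\dd z$, using that $\overline{|u|^2\overline u\,\Lambda u}=|u|^2 u\,\Lambda\overline u$. Substituting this gives exactly the asserted formula, the overall numerical factor being dictated by the normalization chosen for $\Lambda$ (equivalently, it can be fixed by testing both sides on a single mode $e^{ikx}$, for which $|u|^2\equiv1$).

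Alternatively — and this is what I would fall back on if the above route ran into trouble — one can argue purely in Fourier variables: expand $|u(x)-u(y)|^4$ as a quadruple series, substitute $y=x-z$, perform the $x$‑integral (which imposes $j+\ell=k+m$ among the four frequencies), and evaluate the remaining integral $\int_0^{2\pi}\frac{(1-e^{-ijz})(1-e^{-i\ell z})(1-e^{ikz})(1-e^{imz})}{1-\cos z}\,\dd z$ by expanding the numerator into sixteen terms, using $j+\ell=k+m$ to simplify the exponents, checking the constant terms cancel, and regrouping the rest into seven pieces of the form $\pm\bigl(1-\cos(\cdot\,z)\bigr)$, to which the first lemma of this appendix applies termwise; one then matches the resulting Fourier multiplier against the one produced by $-4|u|^2\Lambda|u|^2+2|u|^2(\overline u\Lambda u+u\Lambda\overline u)$, again using $j+\ell=k+m$ to identify $|j-k|$ with $|\ell-m|$ and the like. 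Either way, there is no genuine obstacle: the whole content of the proof is bookkeeping — keeping the $2\pi$ normalizations consistent in the short route, and handling the sixteen‑term expansion together with the final multiplier comparison in the long route.
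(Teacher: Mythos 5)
Your primary route is correct, but it is genuinely different from the paper's. The paper proves Lemma \ref{computation3} from scratch: it expands $u$ in a Fourier series, reduces the double integral to the kernel integral $\int_0^{2\pi}\frac{(1-e^{ikz})(1-e^{i\ell z})(1-e^{-imz})(1-e^{-inz})}{1-\cos z}\,\dd z$, and evaluates that by the same residue lemma used for Lemma \ref{computation2}, i.e.\ essentially your fallback route. Your main argument instead \emph{derives} the quartic identity from the cubic one: the antisymmetrization in $x\leftrightarrow y$ correctly yields $2\int\overline{u}\,F$, the projector is legitimately inserted for free because $\overline{u}$ has non-positive spectrum, and the reality argument converting $\int|u|^2\overline{u}\Lambda u$ into $\tfrac12\int|u|^2(\overline{u}\Lambda u+u\Lambda\overline{u})$ is sound (both the left-hand side and $\int|u|^2\Lambda|u|^2$ are manifestly real). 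What your route buys is economy — no four-factor kernel integral and no sixteen-term bookkeeping — and it makes visible that the energy density is, up to symmetrization, $\overline{u}$ paired against the nonlinearity, which is exactly the relation $\int\overline{\mathcal{C}(u)}\,u\,\dd x=\tfrac{\pi}{2}\mathcal{E}_\sigma(u)$ the paper records separately in \eqref{gradientsandfunctions}; what the paper's route buys is a fully symmetric multiplier formula in $(k,\ell,m,n)$ that is then quoted as \eqref{newformulaE}. The one point requiring care is the prefactor: the paper is not entirely consistent about whether $\Lambda$ carries the symbol $|k|$ or $2\pi|k|$ (the table and \eqref{newformulaC} suggest $2\pi|k|$, while the displayed identity of Lemma \ref{computation3} balances only with $|k|$, the factor $2\pi$ being written out front), so your closing remark that the constant is fixed by the normalization of $\Lambda$ and can be pinned down by testing on a single mode $e^{ikx}$ is exactly the right safeguard, and the rest of your argument is normalization-independent.
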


\begin{proof}
The proof is similar to that of the previous lemma. After expanding $u$ in a Fourier series, we find that
$$
\iint_0^{2\pi}  \frac{|u(x) - u(y)|^4 }{1-\cos (x-y)} \dd x \dd y
= 2\pi \sum_{\substack{k,\ell,m,n >0 \\ k+\ell = m + n }} a_k a_\ell \overline{a_m a_n} \int_0^{2\pi} \frac{(1 - e^{i kz})(1 - e^{i \ell z})(1 - e^{-i mz})(1 - e^{-i nz})}{1- \cos z} \dd z.
$$
The integral can then be evaluated as
\begin{align*}
 &\int_0^{2\pi} \frac{(1 - e^{i kz})(1 - e^{i \ell z})(1 - e^{-i mz})(1 - e^{- i nz})}{1- \cos z} \dd z \\
 & \qquad  =  2\pi \left[ k + \ell + m + n - (k+\ell) - |k-m| - |k-n| - |\ell -m| - |\ell-n| - (m+n) + k + \ell + m + n \right] \\
 & \qquad = 2\pi \left[ k + \ell + m + n  - |k-m| - |k-n| - |\ell -m|  - |\ell-n| \right],
\end{align*}
where we used the facts that $k,\ell, m ,n \geq 0$ and $k + \ell = m+n$. This gives the desired result.
\end{proof}

\bibliographystyle{abbrv}
\bibliography{references}

\end{document}